\newtheorem{theorem}{Theorem}
\newtheorem{remark}{Remark}[section]
\newtheorem{example}{Example}[section]
\begin{document}

\title{Directional differentiability, coexhausters, codifferentials and polyhedral DC functions.}

\author{Majid E. Abbasov         
}


\affil{
              St. Petersburg State University, SPbSU,\\ 7/9 Universitetskaya nab., St.
              Petersburg, 199034 Russia. \\

              Institute for Problems in Mechanical
              Engineering of the RAS\\
              61, Bolshoj pr. V.O., St. Petersburg,
199178\\
              {m.abbasov@spbu.ru, abbasov.majid@gmail.com}           
}

\maketitle

\begin{abstract}

Codifferentials and coexhausters are used to describe
nonhomogeneous approximations of a nonsmooth function. Despite the
fact that coexhausters are modern generalizations of
codifferentials, the theories of these two concepts continue to
develop simultaneously. Moreover, codifferentials and coexhausters
are strongly connected with DC functions. In this paper we trace
analogies between all these objects, and prove the equivalence of
the boundedness and optimality conditions described in terms of
these notions. This allows one to extend the results derived in
terms of one object to the problems stated via the other one.
Another contribution of this paper is the study of connection
between nonhomogeneous approximations and directional derivatives
and formulate optimality conditions in terms of nonhomogeneous
approximations.\end{abstract}


\section*{Introduction}
\label{intro_abbasov} Among the variety of approaches of nonsmooth
analysis \cite{rock70} the method of quasidifferential stands out
due to its constructiveness. One important advantage of this
approach is that all the tools and methods can be built and used
not only theoretically but also in practical problems. The
approach goes back to the early 80-th
when Demyanov, Rubinov and Polyakova proposed and studied the
notion of quasidifferentials
\cite{Demyanov-Rubinov-Polyakova_quasidif79,Demyanov-Rubinov_quasidif80,Demyanov-Vasiliev85,Demyanov-Polyakova80}.
Quasidifferentials are pairs of convex compact sets that enable
one to represent the directional derivative of a function at a
point in a form of sum of maximum and minimum of a linear
functions. Quasidifferentials enjoy full calculus, that grants the
calculation of quasidifferentials for a rich variety of functions.
Such functions are also called quasidifferentiable.

Polyakova and Demyanov derived optimality conditions in terms of
these objects and also showed how to find the directions of
steepest descent and ascent when these conditions are not
satisfied. This paved a way for constructing new optimization
algorithms. 
An interesting example of the application of quasidifferential
calculus can be found in \cite{Sukhorukova2017}, where the authors
use this tool to solve  a complex optimization problem   appearing
in the area of Chebyshev approximation.

In some cases, however, these algorithms experience convergence
problem \cite{Demyanov-Malozemov}. This happens due to the fact
that quasidifferentiable set-valued mapping is not continuous in
Hausdorff metric. Similar results have been reported with
exhausters
\cite{Demyanov-Rubinov2001,Demyanov_optimization_99,Demyanov-dem00a,Demyanov-Abbasov_JOTA_13,Demyanov-optimization-2012,Demyanov-dros05,Demyanov-dros06,Demyanov-dros08,Demyanov-Abbasov_IMMO10}
which can be viewed as a generalization of quasidifferentials
\cite{Abbasov_JOTA_17}.

To overcome this drawback Demyanov and Rubinov in the mid 90-th
introduced the notion of codifferentials
\cite{Demyanov_Rubinov_1995}. Codifferential is a pair of convex
compact sets that provides the representation of the approximation
of the studied function in a neighborhood of a given point in the
form of sum of minimum and maximum of affine functions.

Coexhausters arose as a generalization of codifferentials
\cite{Abankin98}. A class of coexhausterable functions is wider
than the class of codifferentiable functions. Coexhausters are
families of convex compact sets which are used to represent the
approximation of a considered function in a neighborhood of a
point as a sum of MaxMin or MinMax of affine functions. The
formulas of calculus for codifferentials and coexhausters have
been derived as well as optimality conditions in terms of these
tools
\cite{Demyanov-Abbasov_JOGO_2013,Abbasov_JOTA16,Abbasov_Vestnik19}.

The usage of continuously codifferentiable and coexhausterable
functions guaranteed stability and convergence of numerical
algorithms, but positive homogeneity property was lost in this
path. In this paper we address this issue by study the optimality
conditions in terms of inhomogeneous approximations.

It must be noted that codifferentials and coexhausters have strong
connection with DC functions
\cite{Aleksandrov1950,Hartman1959,Pardalos02,Bagirov-Ugon_16,Bagirov-Ugon_18}.
Therefore the problem of studying the connection between all these
notions is of high interest. It can enable us to extend results
derived in terms of one object to the problems stated via another
one.

The paper is organized as follows. In Section 1 we establish
connection between directional derivatives and nonhomogeneous
approximations of a function. Then we give definitions of
codifferentials and coexhausters and connect these notions with
the class of difference of polyhedral convex functions. In Section
2 we present Polyakova's (see
\cite{Polyakova_JOGO_2011}) boundedness condition 
in terms of codifferentials. We prove that this condition is
equivalent to the condition of boundedness stated in
\cite{Abbasov_arXiv_2020} in terms of coexhausters. In Section 3
we describe Demyanov's optimality conditions in terms of
coexhausters and Polyakova's optimality conditions in terms of
codifferentials. We demonstrate that these conditions are
equivalent. All the presented results are also considered from DC
functions point of view.

\section{Directional differentiability, codifferentials, coexhausters and polyhedral DC funcions}\label{s190411}

Let a function $f\colon \mathbb{R}^n \rightarrow  \mathbb{R}$ be
given. The function $f$ is called directionally differentiable at
a point $x \in \mathbb{R}^n$ if for every $\Delta \in
\mathbb{R}^{n}$ there exists the final limit
\begin{equation*}\label{Demyanov_Abbasov_eq13appr}
f^{\prime}(x,\Delta) = \lim_{\alpha\downarrow 0} \frac{f(x+\alpha
\Delta)-f(x)} {\alpha}.
\end{equation*}

The value $f^{\prime}(x,\Delta)$ is called the directional
derivative of the function $f$ at the point $x\in \mathbb{R}^n$ in
the direction $\Delta\in \mathbb{R}^n$. Directional derivative
allows us to formulate necessary conditions for a minimum and
maximum (see \cite{rock70}).

\begin{theorem}\label{Demyanov_Abbasov_T2.2}
Let a function $f\colon \mathbb{R}^{n}\to \mathbb{R}$ be
directionally directionally differentiable at a point $x_{\ast}\in
\mathbb{R}^n$. For the point $x_{\ast}$ to be a minimizer of the
function $f$ on $\mathbb{R}^{n}$ it is necessary that
\begin{equation}\label{Demyanov_Abbasov_eq16appr}
f^{\prime}(x_{\ast},\Delta)\geq 0 \quad \forall \Delta \in
\mathbb{R}^{n}.
\end{equation}
\end{theorem}

\begin{theorem}\label{Demyanov_Abbasov_T2.3}
Let a function $f\colon \mathbb{R}^{n}\to \mathbb{R}$ be
directionally directionally differentiable at a point $x^{\ast}\in
\mathbb{R}^n$. For the point $x^{\ast}$ to be a maximizer of the
function $f$ on $\mathbb{R}^{n}$ it is necessary that
\begin{equation}\label{Demyanov_Abbasov_eq20appr}
f^{\prime}(x^{\ast},\Delta)\leq 0 \quad \forall \Delta  \in
\mathbb{R}^{n}.
\end{equation}
\end{theorem}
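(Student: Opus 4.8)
The plan is to mimic the standard argument behind Theorem \ref{Demyanov_Abbasov_T2.2}, exploiting the sign of the difference quotient. First I would fix an arbitrary direction $\Delta \in \mathbb{R}^n$ and observe that, since $x^{\ast}$ is a maximizer of $f$ on $\mathbb{R}^n$, one has $f(x^{\ast}+\alpha\Delta) \le f(x^{\ast})$ for every $\alpha > 0$; hence the quotient $\dfrac{f(x^{\ast}+\alpha\Delta)-f(x^{\ast})}{\alpha}$ is nonpositive for all $\alpha > 0$. Then I would pass to the limit as $\alpha \downarrow 0$: by the assumed directional differentiability of $f$ at $x^{\ast}$ this limit exists and equals $f^{\prime}(x^{\ast},\Delta)$, and the limit of a nonpositive quantity is nonpositive, so $f^{\prime}(x^{\ast},\Delta) \le 0$. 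Since $\Delta$ was arbitrary, \eqref{Demyanov_Abbasov_eq20appr} follows.

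An equivalent route is to reduce directly to Theorem \ref{Demyanov_Abbasov_T2.2} by passing to $-f$: the point $x^{\ast}$ maximizes $f$ on $\mathbb{R}^n$ if and only if it minimizes $-f$ on $\mathbb{R}^n$, the function $-f$ is directionally differentiable at $x^{\ast}$ with $(-f)^{\prime}(x^{\ast},\Delta) = -f^{\prime}(x^{\ast},\Delta)$, and applying \eqref{Demyanov_Abbasov_eq16appr} to $-f$ yields $-f^{\prime}(x^{\ast},\Delta) \ge 0$ for every $\Delta \in \mathbb{R}^n$, which is precisely \eqref{Demyanov_Abbasov_eq20appr}.

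There is no real obstacle here: the argument is elementary, and the only place where the hypothesis is actually used is in guaranteeing the existence of the limit defining $f^{\prime}(x^{\ast},\Delta)$. If one wished to state the result for a merely local maximizer, the sole adjustment would be to restrict attention to $\alpha$ small enough that $x^{\ast}+\alpha\Delta$ lies in the neighbourhood on which $x^{\ast}$ is maximal, which does not affect the value of the limit.
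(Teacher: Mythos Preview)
Your argument is correct and entirely standard; both the direct difference-quotient computation and the reduction to Theorem~\ref{Demyanov_Abbasov_T2.2} via $-f$ are valid. The paper itself does not prove this statement: it is recorded as a classical result (with a reference to \cite{rock70}) and stated without proof, so there is no in-paper argument to compare against.
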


A point $x_{\ast}$ satisfying condition
(\ref{Demyanov_Abbasov_eq16appr}), is called an $\inf$-stationary
point of the function $f$. A point $x^{\ast}$ satisfying condition
(\ref{Demyanov_Abbasov_eq20appr}), is called a $\sup$-stationary
point of the function $f$.

Let the following expansion holds
\begin{equation}\label{abbasov_expansion_main}
f(x+\Delta)=f(x)+h_x(\Delta)+o_{x}(\Delta)\quad \forall \Delta\in
{\mathbb{R}}^n,
\end{equation}
for a continuous directionally differentiable function $f$, where
\begin{equation*}\label{1904e11a}
\lim_{\alpha\downarrow0}\frac{o_{x}(\alpha\Delta)}{\alpha}=0\quad
\forall\Delta\in{\mathbb{R}}^n.
\end{equation*}
Due to the continuity of $f$ we have $h_x(0_n)=0$ at any point
$x$. Therefore $h_x(\Delta)$ is directionally differentiable at
the origin and
$$f^{\prime}(x,\Delta)=h^{\prime}_x(0_n,\Delta).$$

If $h_x(\Delta)$ is positively homogenous as a function of
$\Delta$ we have
$$h^{\prime}_x(0_n,\Delta)=h_x(\Delta)\quad\forall\Delta\in\mathbb{R}^n,$$
i.e. in this case we can state results similar to Theorems
\ref{Demyanov_Abbasov_T2.2} and \ref{Demyanov_Abbasov_T2.3} by
replacing $f^{\prime}(x,\Delta)$ with $h_{x}(\Delta)$.

If $h_x(\Delta)$ is not positively homogenous then
$$h^{\prime}_x(0_n,\Delta)\neq h_x(\Delta),$$
but inequality $h_x(\Delta)\leq 0$ implies
$h^{\prime}_x(0_n,\Delta)\leq 0$ while inequality $h_x(\Delta)\geq
0$ implies $h^{\prime}_x(0_n,\Delta)\geq 0$. Therefore in this
case the condition
\begin{equation*}\label{abbasov_generel_min_cond}
h_{x_\ast}(\Delta)\geq 0\quad\forall \Delta\in {\mathbb{R}}^n.
\end{equation*}
is sufficient for $x_\ast$ to be an $inf$-stationary point, while
the condition
\begin{equation*}\label{abbasov_generel_max_cond}
h_{x^\ast}(\Delta)\leq 0\quad\forall \Delta\in {\mathbb{R}}^n.
\end{equation*}
is sufficient for $x^\ast$ to be a $sup$-stationary point.

Having a specific form of the approximation $h_{x}(\Delta)$, we
can describe conditions of minimum in terms of objects that define
the form and use these conditions to construct optimization
algorithms. This is the case in smooth case where $h_{x}(\Delta)$
can be presented as the inner product of the gradient and the
direction $h_{x}(\Delta)=\langle\nabla f(x),\Delta\rangle$. For a
nonsmooth function a linear approximation is not applicable and
one have to work with more complicated forms.

The function $f\colon X\to \mathbb{R}$ is called codifferentiable
at a point $x$, if there exist convex compact sets
$\underline{d}f(x)\subset{\mathbb{R}}^{n+1}$ and
$\overline{d}f(x)\subset{\mathbb{R}}^{n+1}$ such that
\begin{equation}\label{1904e1}h_x(\Delta)=
\max_{[a,v]\in{\underline{d}f(x)}}[a+\langle
v,\Delta\rangle]+\min_{[b,w]\in{\overline{d}f(x)}} [b+\langle
w,\Delta\rangle],
\end{equation}
where

  The pair
$Df(x)=[\underline{d}f(x),\overline{d}f(x)]$ is called a
codifferential of the function $f$ at the point $x$. Recall that a
codifferential is a pair of sets in the space $\mathbb{R}^{n+1}$.

The function $f$ is  continuous, therefore from (\ref{1904e1})
(for $\Delta=0_n$) it follows that
 \begin{equation}\label{1904e3am_codiff}
\max_{[a,v]\in{\underline{d}f(x)}}a+\min_{[b,w]\in{\overline{d}f(x)}}
b = 0.
\end{equation}
Since a codifferential function is not uniquely defined at a
point, without loss of generality we can rewrite equality
(\ref{1904e3am_codiff}) as
 \begin{equation}\label{1904e3am_codiff_rewritten}
\max_{[a,v]\in{\underline{d}f(x)}}a=\min_{[b,w]\in{\overline{d}f(x)}}
b = 0.
\end{equation}

A function $f$ is called continuously codifferentiable at a point
$x$ if it is codifferentiable in some neighborhood of the point
$x$ and there exists a codifferential mapping
$Df(x)=[\underline{d}f(x),\overline{d}f(x)]$ which is continuous
in the Hausdorff metric at the point $x$.

Polyhedral codifferential is of high importance for many
applications and therefore we concentrate on this case in the rest
of the paper, i.e.
$$\underline{d}f(x)=\operatorname{co}\left\{[a_i,v_i]\mid i\in
I\right\},\ \overline{d}f(x)=\operatorname{co}\left\{[b_j,w_j]\mid
i\in J\right\},$$ where $I$ and $J$ are finite index sets.

Expression (\ref{1904e1}) implies
\begin{equation}\label{1904e2}
\begin{split}
h_x(\Delta)&=
\max_{[a,v]\in{\underline{d}f(x)}}\min_{[b,w]\in{\overline{d}f(x)}]}[a+b+\langle
v+w,\Delta\rangle]\\
 &=\max_{C\in\underline{E}(x)}\min_{[b,w]\in{C}}[b+\langle w,\Delta\rangle],
\end{split}
\end{equation}
where
$$\underline{E}(x)=\{C\subset{\mathbb{R}}^{n+1}|C=[a,v]+\overline{d}f(x)\},\ [a,v]\in\underline{d}f(x)\}.$$
Similarly we can get the representation
\begin{equation}\label{1904e3}
\begin{split}
h_x(\Delta)&=
\min_{[b,w]\in{\overline{d}f(x)}}\max_{[a,v]\in{\underline{d}f(x)}}[a+b+\langle
v+w,\Delta\rangle]\\&=
\min_{C\in{\overline{E}(x)}}\max_{[a,v]\in{C}}[a+\langle
v,\Delta\rangle],
\end{split}
\end{equation}
where
$$\overline{E}(x)=\{C\subset{\mathbb{R}}^{n+1}|C=[b,w]+\underline{d}f(x)\},\ [b,w]\in\overline{d}f(x)\}.$$
The functions
$$\max_{C\in\underline{E}(x)}\min_{[b,w]\in{C}}[b+\langle w,\Delta\rangle]\quad \mbox{and} \quad \min_{C\in{\overline{E}(x)}}\max_{[a,v]\in{C}}[a+\langle v,\Delta\rangle]$$
represent approximations of the increment of the function $f$ in a
neigbourhood of $x$. The usage of continuously codifferentiable
functions introduced above allows one to guarantee stability and
convergence of numerical algorithms.

The notion of codifferential was introduced in
\cite{Demyanov_Rubinov_1995} where necessary optimality conditions
were stated.
Via expansions (\ref{1904e2}) and (\ref{1904e3}) we obtain the following generalization of the codifferential notion.\\

Let a function $f$ be continuous at a point $x\in X$. We say that
at the point $x$ the function $f$ has an upper coexhauster 
if the following expansion holds:
\begin{equation}\label{1904e3a}
h_x(\Delta)=\min_{C\in{\overline{E}(x)}}\max_{[a,v]\in{C}}[a+\langle
v,\Delta\rangle],
\end{equation}
where $\overline{E}(x)$ is a family of convex compact sets in
${\mathbb{R}}^{n+1}$. The set $\overline{E}(x)$ is called an upper
coexhauster of $f$ at the point $x$.

We say that at the point $x$ the function $f$ has a lower
coexhauster if the following expansion holds:
\begin{equation}\label{1904e3b}h_x(\Delta)=\max_{C\in{\underline{E}(x)}}\min_{[b,w]\in{C}}[b+\langle w,\Delta\rangle],
\end{equation}
where $\underline{E}(x)$ is a family of convex compact sets in
${\mathbb{R}}^{n+1}$. The set $\underline{E}(x)$ is called a lower
coexhauster of the function $f$ at the point $x$.

The function $f$ is  continuous, therefore from (\ref{1904e3a})
and (\ref{1904e3b}) we have
 \begin{equation}\label{1904e3am}
\min_{C\in{\overline{E}(x)}}\max_{[a,v]\in{C}}a =
\max_{C\in{\underline{E}(x)}}\min_{[b,w]\in{C}}b = 0.
\end{equation}
The notion of coexhauster was introduced in
\cite{Demyanov_optimization_99,Demyanov-dem00a}. Similar to the
case of codifferentiable functions, we can consider continuous
upper and lower coexhauster mappings.

It is important to notice that DC functions are codifferentiable
and have upper and lower coexhausters. This means that DC
functions can be studied via the rich theory of codifferentials
and coexhausters. Let us illustrate this.

Local approximation for many DC function can be presented as the
difference of polyhedral convex functions, i.e. in the form
\begin{equation}\label{loc_app_DPC_func}
h_x(\Delta)=\max_{i\in I}[a_i+\langle
v_i,\Delta\rangle]-\max_{j\in J}[b_j+\langle w_j,\Delta\rangle],
\end{equation}
where $I$ and $J$ are finite index sets. We can rewrite
(\ref{loc_app_DPC_func}) in the form

\begin{equation}\label{loc_app_DPC_func_reform}
\begin{split}
h_x(\Delta)&=\max_{i\in I}[a_i+\langle
v_i,\Delta\rangle]+\min_{j\in J}[-b_j-\langle
w_j,\Delta\rangle]\\
&=\max_{[a,v]\in{\underline{d}h}}[a+\langle
v,\Delta\rangle]+\min_{[b,w]\in{\overline{d}h}} [b+\langle
w,\Delta\rangle]\\
&=\max_{C\in{\underline{E}}}\min_{[a,v]\in{C}}[b+\langle
w,\Delta\rangle]\\
&=\min_{C\in{\overline{E}}}\max_{[a,v]\in{C}}[a+\langle
v,\Delta\rangle],
\end{split}
\end{equation}
where $\underline{d}h=\operatorname{co}\left\{[a_i,v_i]\mid i\in
I\right\}$, $\overline{d}h=\operatorname{co}\left\{[-b_j,-w_j]\mid
i\in J\right\}$ and
$$\underline{E}=\left\{\operatorname{co}\{[a_i-b_j,v_i-w_j], j\in J \}\mid  i\in I \right\},$$
$$\overline{E}=\left\{\operatorname{co}\{[a_i-b_j,v_i-w_j], i\in I\}\mid  j\in J \right\}.$$

For the sake of shortness we will use notation $h(\Delta)$ instead
of $h_x(\Delta)$ in what follows.

\section{Boundedness conditions}\label{abbasov_codif_coex_DC_func_sect2}

Local approximation $h$ is often used for construction of
optimization algorithms. Therefore, in the minimization problems,
it is essential that the approximation is bounded from below.
Polyakova derived this condition in terms of codifferential in
\cite{Polyakova_JOGO_2011}.

\begin{theorem}[Polyakova]\label{abbasov_bound_cond_codif}
For the function
$$h(\Delta)=\max_{i\in I}[a_i+\langle
v_i,\Delta\rangle]+\min_{j\in J}[b_j+\langle w_j,\Delta\rangle],$$
where $I$ and $J$ are finite index sets, to be bounded from below
it is necessary and sufficient that for any $j\in J$ the condition
$$-w_j\in\operatorname{co}\left\{v_i\mid i\in I\right\}$$
holds.
\end{theorem}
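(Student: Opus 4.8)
The function is $h(\Delta)=\max_{i\in I}[a_i+\langle v_i,\Delta\rangle]+\min_{j\in J}[b_j+\langle w_j,\Delta\rangle]$. The plan is to analyze boundedness from below along rays $\Delta=\alpha g$ as $\alpha\to+\infty$, reducing to a condition on the recession behavior, and then translate that condition into the stated containment.

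Key insight: for each fixed $j$, consider $h_j(\Delta)=\max_{i\in I}[a_i+\langle v_i,\Delta\rangle]+b_j+\langle w_j,\Delta\rangle$; then $h(\Delta)=\min_{j\in J}h_j(\Delta)$. Since $J$ is finite, $h$ is bounded below iff each $h_j$ is bounded below... no — $h$ being a minimum, boundedness of $h$ from below is NOT equivalent to boundedness of every $h_j$. Let me reconsider.

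Wait, I need to think more carefully.
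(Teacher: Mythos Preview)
The paper does not actually supply a proof of this theorem; it is quoted as Polyakova's result with a citation to \cite{Polyakova_JOGO_2011} and then used as a black box. So there is no paper-proof to compare against, only the question of whether your proposal can be completed.

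Your decomposition $h=\min_{j\in J}h_j$ with $h_j(\Delta)=\max_{i\in I}[a_i+\langle v_i,\Delta\rangle]+b_j+\langle w_j,\Delta\rangle$ is the right move, and your first instinct --- that $h$ is bounded below iff every $h_j$ is --- was correct. You talked yourself out of it by mixing up the logic of $\min$: if $\min_{j}h_j(\Delta)\ge m$ for all $\Delta$, then in particular $h_j(\Delta)\ge m$ for every $j$ and every $\Delta$, because the minimum dominates nothing below it. Conversely, if each $h_j\ge m_j$ then $h\ge\min_j m_j$. So the equivalence holds exactly, and your ``no'' is the gap in the proposal.

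From there the argument finishes cleanly. Rewrite $h_j(\Delta)=b_j+\max_{i\in I}[a_i+\langle v_i+w_j,\Delta\rangle]$; this is a polyhedral convex function whose recession function is $g\mapsto\max_{i\in I}\langle v_i+w_j,g\rangle$. It is bounded below iff this recession function is nonnegative for all $g$, i.e.\ iff there is no $g$ with $\langle v_i+w_j,g\rangle<0$ for all $i\in I$. By a standard separation/LP duality argument (or directly: the support function of $\operatorname{co}\{v_i+w_j\}$ is everywhere nonnegative iff the set contains the origin), this is equivalent to $0\in\operatorname{co}\{v_i+w_j:i\in I\}$, i.e.\ $-w_j\in\operatorname{co}\{v_i:i\in I\}$. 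Requiring this for every $j\in J$ gives the statement.
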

The same condition was obtained in \cite{Abbasov_arXiv_2020} in
terms of coexhausters.

\begin{theorem}[Abbasov]\label{abbasov_bound_cond_coex}
For the function
$$h(\Delta)=\min_{C\in{\overline{E}}}\max_{[a,v]\in{C}}[a+\langle
v,\Delta\rangle]$$ to be bounded from below it is necessary and
sufficient that the condition
\begin{equation*}\label{abbasov_th1_condition} C\bigcap L\neq\emptyset \quad \forall C\in\overline{E},\end{equation*}
is satisfied, where $L=\left\{(a,0_n)\mid a\in\mathbb{R}\right\}$.
\end{theorem}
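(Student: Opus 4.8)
The plan is to reduce this statement to Polyakova's Theorem~\ref{abbasov_bound_cond_codif} by exploiting the explicit correspondence between the codifferential representation and the coexhauster representation of the polyhedral approximation $h$, which was worked out in equations (\ref{1904e2})--(\ref{1904e3}) and (\ref{loc_app_DPC_func_reform}). Concretely, recall that if $\overline{d}h = \operatorname{co}\{[b_j,w_j]\mid j\in J\}$ and $\underline{d}h = \operatorname{co}\{[a_i,v_i]\mid i\in I\}$, then the upper coexhauster is $\overline{E} = \{\,C_j \mid j\in J\,\}$ with $C_j = [b_j,w_j] + \underline{d}h = \operatorname{co}\{[a_i+b_j,\, v_i+w_j]\mid i\in I\}$. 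So each member $C_j$ of $\overline{E}$ is a polytope in $\mathbb{R}^{n+1}$ whose vertices are indexed by $i\in I$. The first step is therefore to translate the geometric condition ``$C_j\cap L\neq\emptyset$'' into an algebraic one: a point of $C_j$ lies in $L=\{(a,0_n)\mid a\in\mathbb{R}\}$ exactly when there exist multipliers $\lambda_i\geq 0$, $\sum_{i\in I}\lambda_i=1$, with $\sum_{i\in I}\lambda_i(v_i+w_j)=0_n$, i.e. $-w_j=\sum_{i\in I}\lambda_i v_i\in\operatorname{co}\{v_i\mid i\in I\}$. Thus $C_j\cap L\neq\emptyset$ for all $j\in J$ is \emph{literally} Polyakova's condition $-w_j\in\operatorname{co}\{v_i\mid i\in I\}$ for all $j\in J$.

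Given this translation, the second step is to observe that the two functions whose boundedness is in question coincide: by (\ref{loc_app_DPC_func_reform}) (read with the sign conventions matching the two theorem statements),
$$\min_{C\in\overline{E}}\ \max_{[a,v]\in C}[a+\langle v,\Delta\rangle] \;=\; \max_{i\in I}[a_i+\langle v_i,\Delta\rangle]+\min_{j\in J}[b_j+\langle w_j,\Delta\rangle]$$
for every $\Delta\in\mathbb{R}^n$ — indeed $\max_{[a,v]\in C_j}[a+\langle v,\Delta\rangle] = \max_{i\in I}[a_i+b_j+\langle v_i+w_j,\Delta\rangle] = b_j+\langle w_j,\Delta\rangle + \max_{i\in I}[a_i+\langle v_i,\Delta\rangle]$, and taking the minimum over $j\in J$ adds the term $\min_{j\in J}[b_j+\langle w_j,\Delta\rangle]$. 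Hence the left-hand side here is bounded below if and only if the right-hand side is, and by Theorem~\ref{abbasov_bound_cond_codif} the latter holds if and only if $-w_j\in\operatorname{co}\{v_i\mid i\in I\}$ for all $j\in J$, which by Step~1 is the asserted coexhauster condition. Chaining the equivalences finishes the proof.

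One subtlety to handle carefully is that Theorem~\ref{abbasov_bound_cond_coex} is stated for an \emph{arbitrary} finite family $\overline{E}$ of convex polytopes in $\mathbb{R}^{n+1}$, whereas the reduction above builds $\overline{E}$ from a codifferential with a common ``inner'' polytope $\underline{d}h$ translated by the vertices $[b_j,w_j]$ of the ``outer'' one. I would address this by going the other direction: starting from an arbitrary $\overline{E}=\{C_1,\dots,C_m\}$, each $C_k=\operatorname{co}\{[a^k_\ell,v^k_\ell]\mid \ell\in L_k\}$, show directly that $h(\Delta)=\min_k\max_{\ell\in L_k}[a^k_\ell+\langle v^k_\ell,\Delta\rangle]$ is bounded below iff each inner max-affine term, after subtracting its own ``recession behaviour'', is dominated — more precisely, iff for every $k$ the recession direction analysis forces $C_k$ to contain a point with zero spatial component. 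This is the step I expect to be the main obstacle, because it requires either (i) a self-contained recession-cone argument showing $h$ is bounded below iff $\min_k\max_{\ell\in L_k}\langle v^k_\ell,\Delta\rangle\geq 0$ for all $\Delta$, i.e. $0_n\in C_k|_{\mathbb{R}^n}$ componentwise — wait, rather $0_n\in\operatorname{co}\{v^k_\ell\}$ — for each $k$; or (ii) reducing the general family to the codifferential form by the construction in the excerpt and citing \cite{Abbasov_arXiv_2020}. I would take route (i): fix $\Delta$, note $h(\lambda\Delta)=\min_k[\lambda\max_\ell\langle v^k_\ell,\Delta\rangle + \max_\ell a^k_\ell + o(1)]$ as $\lambda\to+\infty$ roughly, so unboundedness below is equivalent to the existence of $k$ and $\Delta$ with $\max_{\ell\in L_k}\langle v^k_\ell,\Delta\rangle<0$, which by a standard separation/Gordan-type argument is equivalent to $0_n\notin\operatorname{co}\{v^k_\ell\mid \ell\in L_k\}$, i.e. to $C_k\cap L=\emptyset$. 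The remaining work is just making the $\liminf$ estimate on $h$ along rays uniform in $k$ (a finite minimum) and clean, which is routine.
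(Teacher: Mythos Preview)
The paper does not prove Theorem~\ref{abbasov_bound_cond_coex}; it is quoted with attribution to \cite{Abbasov_arXiv_2020} and then used, together with Theorem~\ref{abbasov_bound_cond_codif}, as a black box inside Theorem~\ref{abbasov_bound_cond_codif_coex_equiv}. So there is no in-paper proof to compare against, and your proposal is supplying what the paper outsources.

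Your argument is essentially correct, but the two halves play different roles than you present. The first half --- reducing to Polyakova via the codifferential/coexhauster correspondence --- is exactly the equivalence $(\ref{abbasov_bound_cond_codif_coex_equiv_eq1})\Leftrightarrow(\ref{abbasov_bound_cond_codif_coex_equiv_eq2})$ that the paper does prove inside Theorem~\ref{abbasov_bound_cond_codif_coex_equiv}, and you are right that it only covers coexhausters of the special form $[b_j,w_j]+\underline{d}h$. The second half, your ``route~(i)'', is the actual self-contained proof for an arbitrary finite polyhedral family and is sound: each inner function $g_k(\Delta)=\max_{\ell\in L_k}[a^k_\ell+\langle v^k_\ell,\Delta\rangle]$ is bounded below iff $0_n\in\operatorname{co}\{v^k_\ell\mid\ell\in L_k\}$ (one direction by separating $0_n$ from the hull and letting $\lambda\to+\infty$ along the separating $\Delta$; the other by observing $g_k(\Delta)\geq\sum_\ell\lambda^k_\ell a^k_\ell$ for the witnessing convex combination), and since $h=\min_k g_k$ with finitely many $k$, $h$ is bounded below iff every $g_k$ is. This bypasses Polyakova entirely and is cleaner than the reduction you led with.

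Two places to tighten. First, replace the informal ray asymptotics ``$h(\lambda\Delta)=\min_k[\lambda\max_\ell\langle v^k_\ell,\Delta\rangle+\max_\ell a^k_\ell+o(1)]$'' with the direct inequalities $h\leq g_k$ (for necessity) and $g_k\geq\sum_\ell\lambda^k_\ell a^k_\ell$ (for sufficiency); the asymptotic formula as written is not literally correct since the constant term depends on which $\ell$ attains the maximum. Second, state explicitly that you use finiteness of $\overline{E}$ in the sufficiency direction (so that $\min_k c_k>-\infty$); this is harmless in the polyhedral setting of the paper but is an assumption you are making.
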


Based on Theorems \ref{abbasov_bound_cond_codif} and
\ref{abbasov_bound_cond_coex} we can state and prove general
result which connects polyhedral DC-functions, codifferentials and
coexhausters.

\begin{theorem}\label{abbasov_bound_cond_codif_coex_equiv}
For the function $$h(\Delta)=\max_{i\in I}[a_i+\langle
v_i,\Delta\rangle]-\max_{j\in J}[b_j+\langle w_j,\Delta\rangle],$$
where $I$ and $J$ are finite index sets, to be bounded from below
it is necessary and sufficient that one of the following
equivalent conditions hold
\begin{equation}\label{abbasov_bound_cond_codif_coex_equiv_eq1}
w_j\in\operatorname{co}\left\{v_i\mid i\in I\right\}, \quad
\forall j\in J
\end{equation}
or
\begin{equation}\label{abbasov_bound_cond_codif_coex_equiv_eq2}
C\bigcap L\neq\emptyset \quad \forall C\in\overline{E}
\end{equation}
where $L=\left\{(a,0_n)\mid a\in\mathbb{R} \right\}$ and
$$\overline{E}=\left\{C\mid C=\operatorname{co}\{[a_i-b_j,v_i-w_j],
i\in I\}, j\in J \right\}.$$
\end{theorem}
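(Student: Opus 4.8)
The plan is to reduce the assertion to the two cited results, Theorem~\ref{abbasov_bound_cond_codif} (Polyakova) and Theorem~\ref{abbasov_bound_cond_coex} (Abbasov), by rewriting $h$ in the two normal forms for which those theorems are stated, and then to note that conditions (\ref{abbasov_bound_cond_codif_coex_equiv_eq1}) and (\ref{abbasov_bound_cond_codif_coex_equiv_eq2}) have each been characterised as ``$h$ is bounded from below'', whence they are equivalent to each other and each is necessary and sufficient for boundedness.

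First I would use the identity $-\max_{j\in J}[b_j+\langle w_j,\Delta\rangle]=\min_{j\in J}[(-b_j)+\langle (-w_j),\Delta\rangle]$, already recorded in (\ref{loc_app_DPC_func_reform}), to write
\begin{equation*}
h(\Delta)=\max_{i\in I}[a_i+\langle v_i,\Delta\rangle]+\min_{j\in J}\bigl[(-b_j)+\langle (-w_j),\Delta\rangle\bigr].
\end{equation*}
This is precisely the codifferential normal form of Theorem~\ref{abbasov_bound_cond_codif} with the data $(b_j,w_j)$ replaced by $(-b_j,-w_j)$. Applying that theorem, $h$ is bounded from below if and only if $-(-w_j)=w_j\in\operatorname{co}\{v_i\mid i\in I\}$ for every $j\in J$, which is exactly (\ref{abbasov_bound_cond_codif_coex_equiv_eq1}). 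Next, again by the chain of equalities in (\ref{loc_app_DPC_func_reform}), the same $h$ admits the representation $h(\Delta)=\min_{C\in\overline{E}}\max_{[a,v]\in C}[a+\langle v,\Delta\rangle]$ with $\overline{E}=\{\operatorname{co}\{[a_i-b_j,v_i-w_j]\mid i\in I\}\mid j\in J\}$, i.e.\ $\overline{E}$ is verbatim the family in the statement. Theorem~\ref{abbasov_bound_cond_coex} then yields that $h$ is bounded from below if and only if $C\cap L\neq\emptyset$ for all $C\in\overline{E}$, which is (\ref{abbasov_bound_cond_codif_coex_equiv_eq2}). Hence both conditions are equivalent to the same property of $h$, and therefore to each other.

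For completeness I would also give the direct, purely geometric, proof of (\ref{abbasov_bound_cond_codif_coex_equiv_eq1}) $\Leftrightarrow$ (\ref{abbasov_bound_cond_codif_coex_equiv_eq2}) that does not pass through $h$: fix $j\in J$ and put $C_j=\operatorname{co}\{[a_i-b_j,v_i-w_j]\mid i\in I\}$. A generic point of $C_j$ is $\sum_{i\in I}\lambda_i[a_i-b_j,v_i-w_j]$ with $\lambda_i\ge 0$ and $\sum_{i\in I}\lambda_i=1$, and, using $\sum_{i\in I}\lambda_i=1$, its last $n$ coordinates equal $\sum_{i\in I}\lambda_i v_i-w_j$. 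Thus $C_j\cap L\neq\emptyset$ holds exactly when there is such a $\lambda$ with $\sum_{i\in I}\lambda_i v_i=w_j$, i.e.\ exactly when $w_j\in\operatorname{co}\{v_i\mid i\in I\}$; letting $j$ range over $J$ gives the stated equivalence.

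I do not expect a genuine obstacle here: the proof is essentially bookkeeping. The two points needing care are the sign tracking in the passage to Polyakova's codifferential form (so that ``$-w_j$'' there becomes ``$w_j$'' here), and the verification that the family $\overline{E}$ produced by (\ref{loc_app_DPC_func_reform}) coincides term-by-term with the one in the statement; one should also note in passing that Theorems~\ref{abbasov_bound_cond_codif} and~\ref{abbasov_bound_cond_coex} are applied to the \emph{exact} equalities above, so no appeal to the normalisations (\ref{1904e3am_codiff_rewritten}) or (\ref{1904e3am}) is required.
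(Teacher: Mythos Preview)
Your proposal is correct and follows essentially the same route as the paper: rewrite $h$ in the max-plus-min form, invoke Theorems~\ref{abbasov_bound_cond_codif} and~\ref{abbasov_bound_cond_coex} to tie each condition to boundedness, and verify the equivalence of (\ref{abbasov_bound_cond_codif_coex_equiv_eq1}) and (\ref{abbasov_bound_cond_codif_coex_equiv_eq2}) by the convex-combination argument on the last $n$ coordinates. The paper's proof is slightly terser on the first part and relies only on the direct geometric equivalence, whereas you give both the ``common equivalent'' argument and the direct one; but the substance is identical.
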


\begin{proof}
Since the function $h$ can be rewritten in the form
$$h(\Delta)=\max_{i\in I}[a_i+\langle
v_i,\Delta\rangle]+\min_{j\in J}[-b_j-\langle
w_j,\Delta\rangle],$$ boundedness of $h$ yields immediately from
Theorems \ref{abbasov_bound_cond_codif} and
\ref{abbasov_bound_cond_coex}.

Prove that conditions
(\ref{abbasov_bound_cond_codif_coex_equiv_eq1}) and
(\ref{abbasov_bound_cond_codif_coex_equiv_eq2}) are equivalent.
Let condition (\ref{abbasov_bound_cond_codif_coex_equiv_eq2})
holds. Then we have
$$0_n\in\operatorname{co}\{v_i-w_j\mid i\in I\}\quad \forall j\in J.$$
This implies that for an arbitrary $j\in J$ there exists
$\lambda_{i}$, $i\in I$ such that
$$
\begin{cases}
\displaystyle\sum_{i\in I}\lambda_{i}=1,\\
\lambda_{i}\geq 0\quad\forall i\in I,\\
\end{cases}
$$
for which holds the condition
$$\displaystyle\sum_{i\in I}\lambda_{i}(v_i-w_j)=0.$$
Therefore for we have
$$\displaystyle\sum_{i\in I}\lambda_{i}v_i=w_j,$$
what implies (\ref{abbasov_bound_cond_codif_coex_equiv_eq1}).

To prove that (\ref{abbasov_bound_cond_codif_coex_equiv_eq2})
follows from (\ref{abbasov_bound_cond_codif_coex_equiv_eq1}) we
can run the same proof backwards. \qed
\end{proof}

Similar theorem can be stated for the upper boundedness
conditions.

\begin{theorem}\label{abbasov_up_bound_cond_codif_coex_equiv}
For the function $$h(\Delta)=\max_{i\in I}[a_i+\langle
v_i,\Delta\rangle]-\max_{j\in J}[b_j+\langle w_j,\Delta\rangle],$$
where $I$ and $J$ are finite index sets, to be upper bounded it is
necessary and sufficient that one of the following equivalent
conditions hold

\begin{equation}\label{abbasov_up_bound_cond_codif_coex_equiv_eq1}
v_i\in\operatorname{co}\left\{w_j\mid j\in J\right\}, \quad
\forall i\in I
\end{equation}
or
\begin{equation}\label{abbasov_up_bound_cond_codif_coex_equiv_eq2}
C\bigcap L\neq\emptyset \quad \forall C\in\underline{E}
\end{equation}
where $L=\left\{(a,0_n)\mid a\in\mathbb{R} \right\}$ and
$$\underline{E}=\left\{C\mid C=\operatorname{co}\{[a_i-b_j,v_i-w_j],
j\in J\}, i\in I \right\}.$$
\end{theorem}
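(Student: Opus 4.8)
The plan is to reduce everything to the already–established lower–boundedness results by passing to $-h$. Observe that $h$ is bounded from above if and only if the function
$$-h(\Delta)=\max_{j\in J}[b_j+\langle w_j,\Delta\rangle]-\max_{i\in I}[a_i+\langle v_i,\Delta\rangle]$$
is bounded from below. This function has exactly the structural form treated in Theorem~\ref{abbasov_bound_cond_codif_coex_equiv}, but with the roles of the two index sets interchanged: the ``outer'' maximum is now taken over $J$ with data $[b_j,w_j]$, while the subtracted maximum is taken over $I$ with data $[a_i,v_i]$.

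First I would rewrite $-h$ as $\max_{j\in J}[b_j+\langle w_j,\Delta\rangle]+\min_{i\in I}[-a_i-\langle v_i,\Delta\rangle]$ and apply Polyakova's Theorem~\ref{abbasov_bound_cond_codif} with $I$ and $J$ swapped and $v_i$ replaced by $-v_i$. This yields that $-h$ is bounded from below if and only if $-(-v_i)=v_i\in\operatorname{co}\{w_j\mid j\in J\}$ for every $i\in I$, which is precisely condition~\eqref{abbasov_up_bound_cond_codif_coex_equiv_eq1}. For the coexhauster formulation, applying Theorem~\ref{abbasov_bound_cond_coex} to $-h$ produces the family $\{C\mid C=\operatorname{co}\{[b_j-a_i,w_j-v_i],\ j\in J\},\ i\in I\}$, whose members are exactly the negatives of the sets in $\underline{E}$. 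Since $L$ is a linear subspace, $C\cap L\neq\emptyset$ if and only if $(-C)\cap L\neq\emptyset$, so condition~\eqref{abbasov_up_bound_cond_codif_coex_equiv_eq2} is equivalent to the corresponding coexhauster condition for $-h$.

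To keep the statement self-contained I would also verify directly that \eqref{abbasov_up_bound_cond_codif_coex_equiv_eq1} and \eqref{abbasov_up_bound_cond_codif_coex_equiv_eq2} are equivalent, arguing as in Theorem~\ref{abbasov_bound_cond_codif_coex_equiv}: $C\cap L\neq\emptyset$ for all $C\in\underline{E}$ means $0_n\in\operatorname{co}\{v_i-w_j\mid j\in J\}$ for every $i\in I$, i.e. for each $i$ there are multipliers $\mu_j\geq 0$ with $\sum_{j\in J}\mu_j=1$ and $\sum_{j\in J}\mu_j(v_i-w_j)=0_n$; rearranging gives $v_i=\sum_{j\in J}\mu_j w_j$, which is \eqref{abbasov_up_bound_cond_codif_coex_equiv_eq1}, and the converse is obtained by running the same computation backwards.

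I do not anticipate a genuine obstacle: the only delicate point is the bookkeeping — correctly identifying which index set plays the role of the outer maximum after the sign change, and observing the sign flip $[b_j-a_i,w_j-v_i]=-[a_i-b_j,v_i-w_j]$ together with the symmetry of $L$ under $x\mapsto -x$. Once these are pinned down, the theorem follows as an immediate consequence of Theorems~\ref{abbasov_bound_cond_codif}, \ref{abbasov_bound_cond_coex} and \ref{abbasov_bound_cond_codif_coex_equiv}.
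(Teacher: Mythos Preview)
Your proposal is correct and matches the paper's intent: the paper does not give an explicit proof of this theorem at all, stating only that a ``similar theorem can be stated for the upper boundedness conditions'' immediately after proving Theorem~\ref{abbasov_bound_cond_codif_coex_equiv}. Your reduction to $-h$ together with the direct equivalence argument is exactly the routine duality the paper leaves implicit, so you have simply written out what the paper omits.
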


Now let us demonstrate how these results works.

\begin{example}

Consider the function
$$h(\Delta)=\max\{2\Delta-4,0,-2\Delta-4\}-\max\{\Delta-1,0,-\Delta-1\}.$$

Fig. \ref{exmpl_1_exmpl1_func_graph} shows that $h$ is bounded
from below.
\begin{figure}[H]
\includegraphics[width=0.65\textwidth]{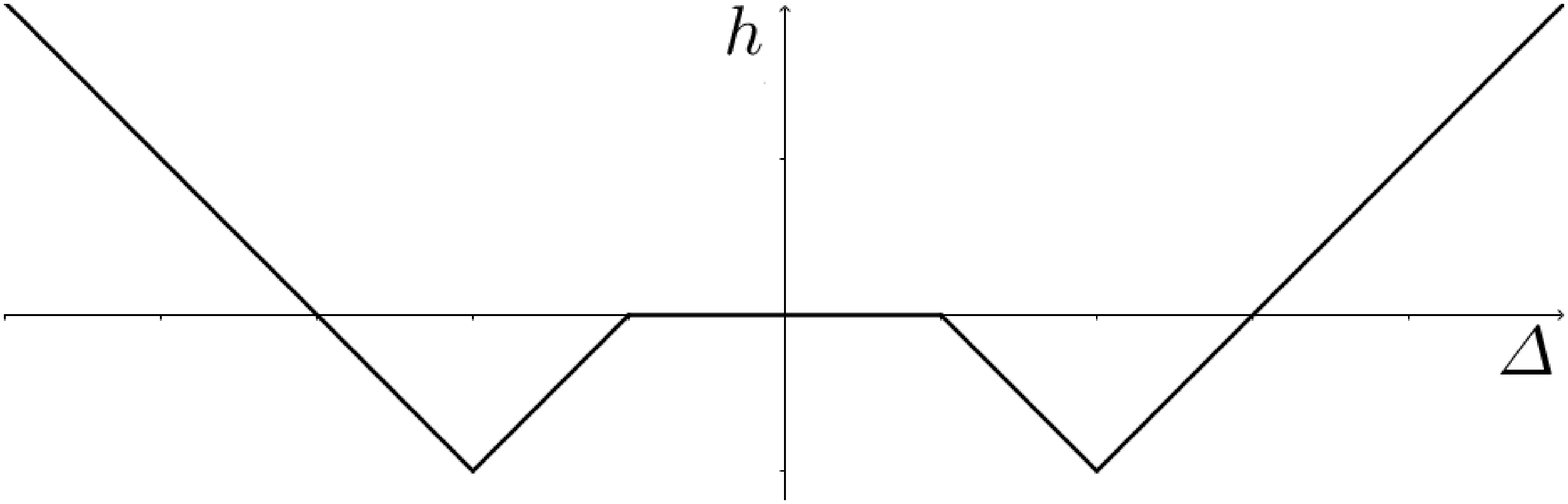}
\caption{Graph of the function $h$ in Example 1.}
\label{exmpl_1_exmpl1_func_graph}
\end{figure}

Since $h$ can be rewritten as
$$h(\Delta)=\max\{2\Delta-4,0,-2\Delta-4\}+\min\{-\Delta+1,0,\Delta+1\},$$
the function has a codifferential of the form (see Fig.
\ref{exmpl_1_exmpl1_func_codif_exstr} a)
$$\underline{d}h=\operatorname{co}\left\{\begin{pmatrix} -&4\\ &2 \end{pmatrix},\begin{pmatrix} 0\\ 0 \end{pmatrix},\begin{pmatrix} -4\\ -2 \end{pmatrix},\right\}, \quad \overline{d}h=\operatorname{co}\left\{\begin{pmatrix} &1\\ -&1 \end{pmatrix},\begin{pmatrix} 0\\ 0 \end{pmatrix},\begin{pmatrix} 1\\ 1 \end{pmatrix}\right\},$$
and an upper coexhauster $\overline{E}=\{C_1,C_2,C_3\}$, where
$$C_1=\operatorname{co}\left\{\begin{pmatrix} -&3\\ &1 \end{pmatrix},\begin{pmatrix} &1\\ -&1 \end{pmatrix},\begin{pmatrix} -3\\ -3 \end{pmatrix}\right\},\quad C_2=\operatorname{co}\left\{\begin{pmatrix} -&4\\
&2
\end{pmatrix},\begin{pmatrix} 0\\ 0 \end{pmatrix},\begin{pmatrix}
-4\\ -2 \end{pmatrix}\right\},$$
$$C_3=\operatorname{co}\left\{\begin{pmatrix} -&3\\ &3 \end{pmatrix},\begin{pmatrix} 1\\ 1 \end{pmatrix},\begin{pmatrix} -3\\ -1 \end{pmatrix}\right\}$$
(see Fig. \ref{exmpl_1_exmpl1_func_codif_exstr} b).

\begin{figure}[H]
\begin{minipage}[h]{0.49\linewidth}
\center{\includegraphics[width=0.5\linewidth]{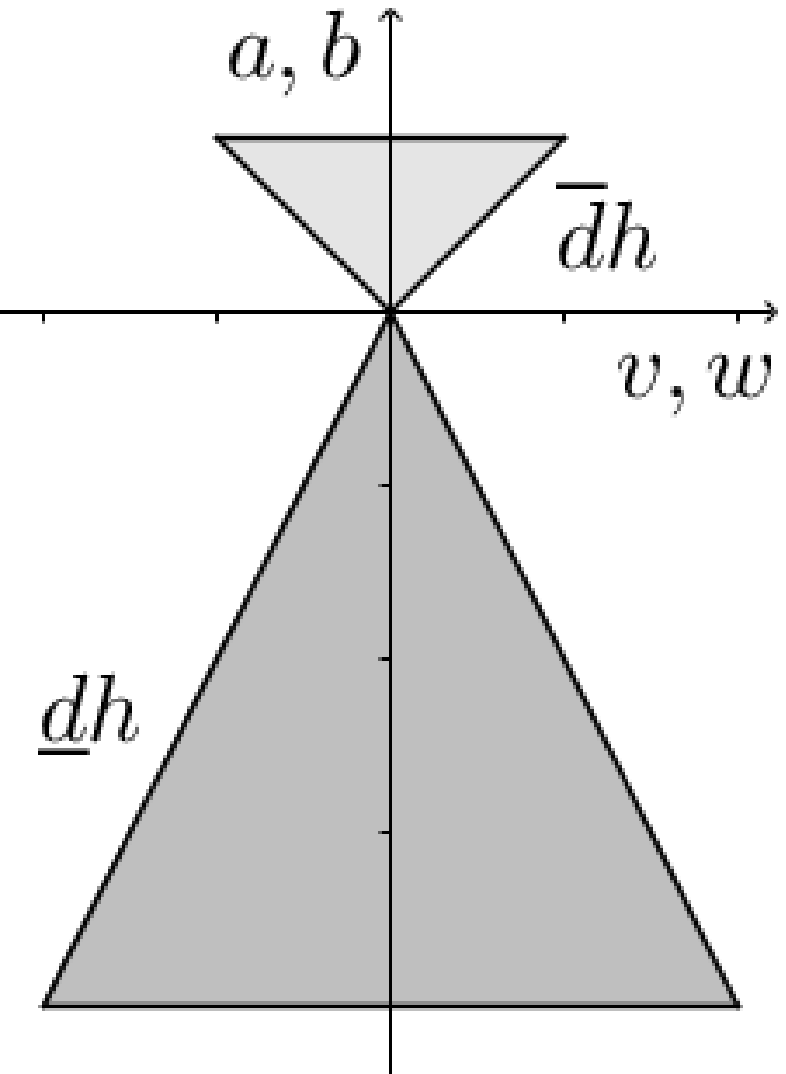}
\\ \textbf{a}}
\end{minipage}
\hfill
\begin{minipage}[h]{0.49\linewidth}
\center{\includegraphics[width=0.75\linewidth]{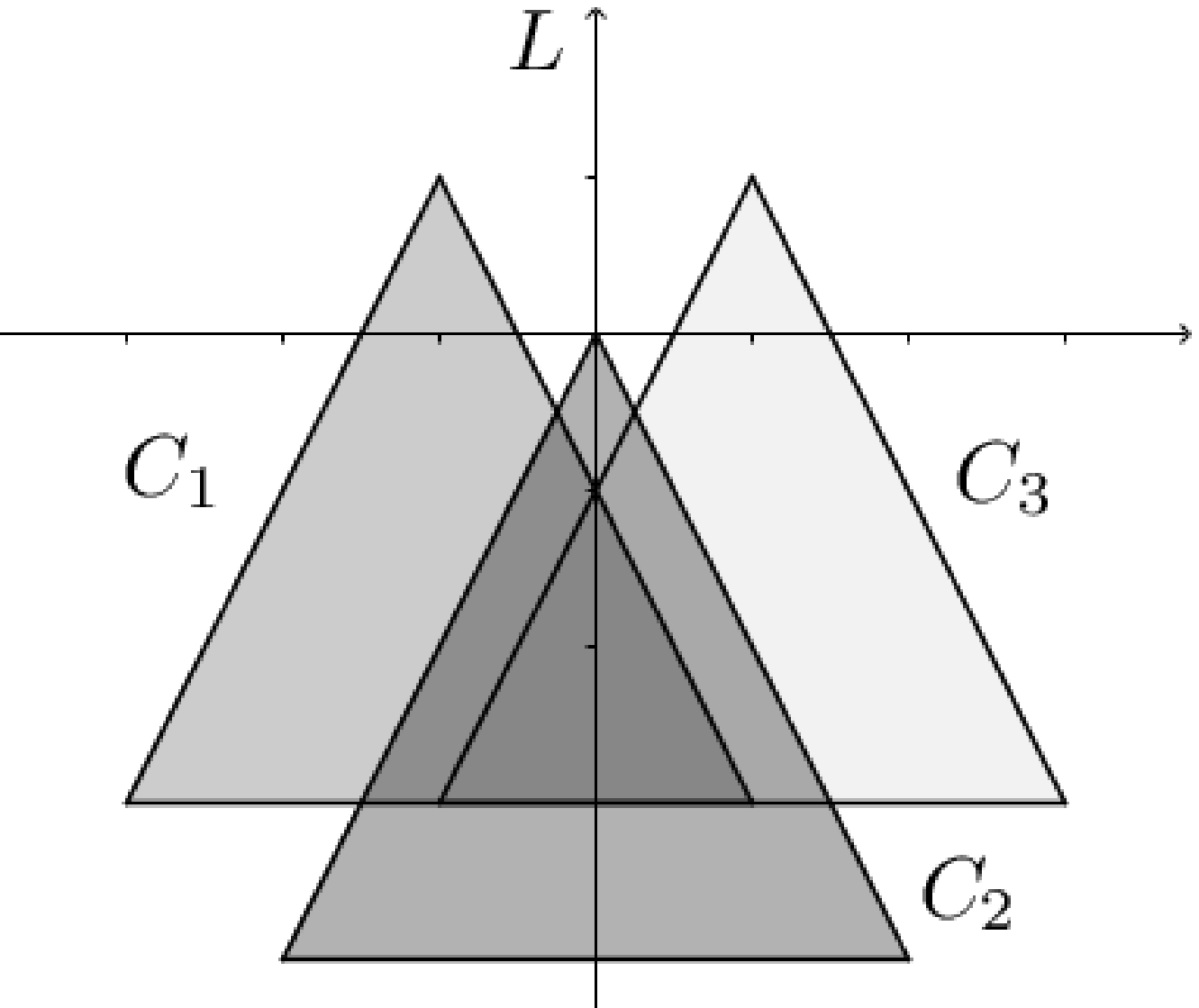}
\\ \textbf{b}}
\end{minipage}
\caption{\textbf{a.} A codifferential of the function $h$ in
Example 1.\ \textbf{b.} An upper coexhauster of the function $h$
in Example 1.} \label{exmpl_1_exmpl1_func_codif_exstr}
\end{figure}

Sets $\underline{d}h$ and $\overline{d}h$ are polyhedrons. We have
$v_1=2$, $v_2=0$, $v_3=-2$, $w_1=-1$, $w_2=0$, $w_3=1$. Therefore
$\operatorname{co}\{v_i\mid i=\overline{1,3}\}=[-2,2]$ and
$w_j\in[-2,2]$ for all $j=\overline{1,3}$. It is obvious (see Fig.
\ref{exmpl_1_exmpl1_func_codif_exstr} b) that $C_i\bigcap
L\neq\emptyset$ for all $i=\overline{1,3}$. This means that both
conditions (\ref{abbasov_bound_cond_codif_coex_equiv_eq1}) and
(\ref{abbasov_bound_cond_codif_coex_equiv_eq2}) are satisfied
here.

\end{example}

\section{Optimality conditions}\label{abbasov_codif_coex_DC_func_sect3}

Now let us proceed to the minimality conditions in terms of
coexhausters and codifferentials.

\begin{theorem}[Demaynov \cite{Demyanov-optimization-2012}]\label{Abbasov_th_Demaynov_min_cond}
For the inequality
$$h(\Delta)=\min_{C\in{\overline{E}}}\max_{[a,v]\in{C}}[a+\langle
v,\Delta\rangle]\geq 0$$ to be valid for all
$\Delta\in\mathbb{R}^n$ it is necessary and sufficient that the
condition
$$C\bigcap L_{+}\neq\emptyset\quad \forall C\in\overline{E},$$
where $L_0^+=\left\{(a,0_n)\mid a\geq 0\right\}$, holds.
\end{theorem}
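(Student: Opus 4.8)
The plan is to prove the two directions separately, working directly from the MinMax structure of $h$.

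\medskip

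\textbf{Sufficiency.} Suppose $C\cap L_+\neq\emptyset$ for every $C\in\overline{E}$. Fix an arbitrary $\Delta\in\mathbb{R}^n$. For each $C\in\overline{E}$ pick $[a_C,0_n]\in C\cap L_+$, so $a_C\geq 0$. Then
$$\max_{[a,v]\in C}[a+\langle v,\Delta\rangle]\geq a_C+\langle 0_n,\Delta\rangle=a_C\geq 0.$$
Taking the minimum over $C\in\overline{E}$ gives $h(\Delta)\geq 0$. Since $\Delta$ was arbitrary, this half is done; note it is essentially immediate and requires no compactness beyond the existence of the max.

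\medskip

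\textbf{Necessity.} Suppose $h(\Delta)\geq 0$ for all $\Delta$, and suppose for contradiction that some $C_0\in\overline{E}$ has $C_0\cap L_+=\emptyset$. The idea is to find a single direction $\bar\Delta$ for which $\max_{[a,v]\in C_0}[a+\langle v,\bar\Delta\rangle]<0$, because then $h(\bar\Delta)\le\max_{[a,v]\in C_0}[a+\langle v,\bar\Delta\rangle]<0$, contradicting $h\ge0$. To construct such a $\bar\Delta$, I would invoke the boundedness condition proved above: since $h$ is bounded from below (it is nonnegative), Theorem~\ref{abbasov_bound_cond_coex} gives $C\cap L\neq\emptyset$ for all $C\in\overline{E}$, where $L=\{(a,0_n)\mid a\in\mathbb{R}\}$. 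So $C_0$ does meet $L$, but by assumption only in points with $a<0$; i.e. $\max\{a\mid [a,0_n]\in C_0\}=:a_0<0$. The geometric picture is that the convex compact set $C_0\subset\mathbb{R}^{n+1}$ lies, on the hyperplane $\{v=0_n\}$, entirely in the open half-line $a<0$. I want to tilt the linear functional $[a,v]\mapsto a+\langle v,\Delta\rangle$ slightly off the $a$-axis so that its maximum over $C_0$ stays negative; equivalently, I want to separate $C_0$ from $L_+$. Since $C_0$ is convex compact, $L_+$ is closed convex, and they are disjoint, there is a hyperplane in $\mathbb{R}^{n+1}$ strictly separating them: there exist $(\mu,\Delta)\in\mathbb{R}\times\mathbb{R}^n$, not both zero, and $c\in\mathbb{R}$ with
$$\mu a+\langle v,\Delta\rangle<c<\mu a'\quad\text{for all }[a,v]\in C_0,\ [a',0_n]\in L_+.$$
From the $L_+$ side, letting $a'\to+\infty$ forces $\mu\geq0$, and taking $a'=0$ gives $c\le0$... here I must be careful: if $\mu=0$ the separation says $\langle v,\Delta\rangle<c\le 0$ for all $[a,v]\in C_0$ with $\Delta\neq 0_n$, but that is inconsistent with $C_0$ meeting $L$ (which forces some $v=0_n$, giving $0<c\le0$) — so in fact $\mu>0$. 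Rescaling by $1/\mu$, I get $\bar\Delta$ and a constant $\bar c\le 0$ with $a+\langle v,\bar\Delta\rangle< \bar c\le 0$ for all $[a,v]\in C_0$, hence $\max_{[a,v]\in C_0}[a+\langle v,\bar\Delta\rangle]<0$, and the contradiction follows.

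\medskip

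The main obstacle I anticipate is the careful handling of the separation argument, specifically ruling out the degenerate case $\mu=0$ and confirming that the separating functional can be normalized to the required affine form $a+\langle v,\bar\Delta\rangle$; this is where the already-established boundedness condition (Theorem~\ref{abbasov_bound_cond_coex}, giving $C_0\cap L\neq\emptyset$) does the essential work, since without it $C_0$ could be separated from the whole $a$-axis and the normalization would fail. An alternative, perhaps cleaner, route avoiding explicit separation is to translate the whole statement into the codifferential language via expansions~\eqref{1904e2}–\eqref{1904e3} and Polyakova's optimality conditions in terms of codifferentials (which the introduction promises are developed in this section), and then use the equivalence machinery of Theorem~\ref{abbasov_bound_cond_codif_coex_equiv}; but the direct separation proof above is self-contained given what has been stated so far.
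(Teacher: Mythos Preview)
The paper does not supply its own proof of this theorem: it is stated with attribution to Demyanov~\cite{Demyanov-optimization-2012} and used as a black box in the subsequent equivalence Theorem~\ref{abbasov_th_eq_min_cond}. So there is no in-paper argument to compare against.

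Your proof is correct. Sufficiency is immediate, as you observe. For necessity, the key step---using the already-established boundedness criterion (Theorem~\ref{abbasov_bound_cond_coex}) to guarantee $C_0\cap L\neq\emptyset$, which in turn rules out the degenerate separating normal $\mu=0$---is exactly the right idea and is carried out cleanly. Two minor remarks on presentation: (i) strong separation of a compact convex set from a disjoint closed convex set gives $\sup_{C_0}(\mu a+\langle v,\Delta\rangle)<\inf_{L_+}\mu a'$, and the right-hand infimum equals $0$ once $\mu\ge 0$ is established, so you may dispense with the intermediate constant $c$ and write directly $\sup_{C_0}(\mu a+\langle v,\Delta\rangle)<0$; (ii) in the $\mu=0$ case you actually obtain $c<0$ (not merely $c\le 0$) from $a'=0$, which makes the contradiction $0<c<0$ slightly cleaner than the $0<c\le 0$ you wrote, though of course either suffices. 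The alternative route you sketch---passing through Polyakova's codifferential condition via the equivalences of Theorem~\ref{abbasov_th_eq_min_cond}---would be circular here, since that theorem invokes the present one; your direct separation argument is the right choice.
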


\begin{theorem}[Polyakova \cite{Polyakova_JOGO_2011}]\label{Abbasov_th_Polyakova_min_cond}
For the inequality
$$h(\Delta)=\max_{i\in I}[a_i+\langle
v_i,\Delta\rangle]+\min_{j\in J}[b_j+\langle
w_j,\Delta\rangle]\geq 0$$ to be valid for all
$\Delta\in\mathbb{R}^n$ it is necessary and sufficient that the
condition
$$\operatorname{co}\{(a_i,v_i)\mid i\in I\}\bigcap \operatorname{co}\{(-b_j,-w_j),(0,-w_j)\}\neq\emptyset\quad\forall j\in J$$
holds.
\end{theorem}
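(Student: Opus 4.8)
The plan is to establish the equivalence between Polyakova's optimality condition (Theorem~\ref{Abbasov_th_Polyakova_min_cond}) and Demyanov's optimality condition (Theorem~\ref{Abbasov_th_Demaynov_min_cond}), mirroring the strategy used in Theorem~\ref{abbasov_bound_cond_codif_coex_equiv}. Since both theorems characterize the same property --- namely that $h(\Delta)\geq 0$ for all $\Delta\in\mathbb{R}^n$ for the polyhedral DC function $h(\Delta)=\max_{i\in I}[a_i+\langle v_i,\Delta\rangle]-\max_{j\in J}[b_j+\langle w_j,\Delta\rangle]$ rewritten in codifferential form with $\overline{d}h=\operatorname{co}\{[-b_j,-w_j]\mid j\in J\}$ --- the nonnegativity of $h$ follows immediately from the two cited theorems applied to the reformulation in~(\ref{loc_app_DPC_func_reform}). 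The real content is to show directly, by a chain of elementary manipulations, that the condition
$$\operatorname{co}\{(a_i,v_i)\mid i\in I\}\bigcap \operatorname{co}\{(-b_j,-w_j),(0,-w_j)\}\neq\emptyset\quad\forall j\in J$$
is equivalent to $C\cap L_+^0\neq\emptyset$ for every $C\in\overline{E}$, where $C=\operatorname{co}\{[a_i-b_j,v_i-w_j]\mid i\in I\}$ for each fixed $j\in J$ and $L_+^0=\{(a,0_n)\mid a\geq 0\}$.

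First I would fix $j\in J$ and unpack the left-hand condition. A point lies in $\operatorname{co}\{(a_i,v_i)\mid i\in I\}\cap\operatorname{co}\{(-b_j,-w_j),(0,-w_j)\}$ iff there exist $\lambda_i\geq 0$ with $\sum_{i\in I}\lambda_i=1$ and a scalar $\mu\in[0,1]$ such that $\sum_{i\in I}\lambda_i a_i = -\mu b_j$ and $\sum_{i\in I}\lambda_i v_i = -w_j$ (the second coordinate of every point in the second convex hull is $-w_j$, which already forces $\sum\lambda_i v_i=-w_j$). Rearranging: this says $\sum_{i\in I}\lambda_i(v_i-w_j) = -w_j + (-w_j)$... more cleanly, I would observe that $\sum_i\lambda_i(v_i-w_j)=\sum_i\lambda_i v_i - w_j = -2w_j$ is the wrong grouping; instead the natural object is $\sum_i\lambda_i\big[(a_i,v_i)-(b_j,w_j)\big]=(a_i-b_j,v_i-w_j)$-combinations, i.e.\ points of $C$. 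So a convex combination of $C$ equals $\big(\sum_i\lambda_i a_i - b_j,\ \sum_i\lambda_i v_i - w_j\big)$, and this lies in $L_+^0$ iff $\sum_i\lambda_i v_i = w_j$ and $\sum_i\lambda_i a_i - b_j\geq 0$, i.e.\ $\sum_i\lambda_i a_i\geq b_j$. The plan is then to match this with the Polyakova condition by the substitution $\mu = \sum_i\lambda_i a_i / b_j$ when $b_j\neq 0$ (handling the sign of $b_j$ and the case $b_j=0$ separately), checking that $\mu\in[0,1]$ corresponds exactly to $0\leq \sum_i\lambda_i a_i \leq b_j$ versus $\geq b_j$ --- here I expect to need to use the normalization~(\ref{1904e3am}), which in the polyhedral DC setting forces $\max_i a_i = \max_j b_j = 0$ after the reformulation, so that all relevant constants carry a fixed sign, reconciling the apparent discrepancy between ``$\sum\lambda_i a_i\geq b_j$'' and membership in the segment $[(−b_j,−w_j),(0,−w_j)]$.

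The main obstacle I anticipate is precisely this bookkeeping around the normalization and the direction of the inequalities: the Demyanov condition uses the ray $L_+^0=\{(a,0_n):a\geq 0\}$ while the Polyakova condition uses a bounded segment between $(-b_j,-w_j)$ and $(0,-w_j)$, and reconciling ``unbounded ray'' with ``bounded segment'' requires exploiting that after the standard reformulation one may assume $\max_{i\in I}a_i=0$ and $b_j\geq 0$ (equivalently $\max_j b_j=0$ in the original $-\max$ term), so that $\sum_i\lambda_i a_i\leq 0\leq b_j$ automatically and the ``$\geq$'' in the ray condition is only ever tested against nonpositive values. Once the sign conventions are pinned down, the equivalence reduces to: \emph{there exist $\lambda_i\geq 0$, $\sum\lambda_i=1$, with $\sum\lambda_i v_i=w_j$ and $\sum\lambda_i a_i\in[0,b_j]$} (Polyakova, after identifying the segment coordinate) iff \emph{there exist such $\lambda_i$ with $\sum\lambda_i v_i=w_j$ and $\sum\lambda_i a_i\geq b_j$} is \emph{not} what we want --- rather the correct reading gives $\sum_i\lambda_i a_i\geq 0$ for the ray, and these coincide because the set of achievable values $\{\sum_i\lambda_i a_i : \lambda\in\Delta_{|I|},\ \sum_i\lambda_i v_i=w_j\}$ is an interval whose relevant endpoint behavior I would extract from the structure of $C$. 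I would then finish by noting, as in Theorem~\ref{abbasov_bound_cond_codif_coex_equiv}, that the argument is reversible, so each direction follows by running the chain of equalities backwards, and remark that the whole statement can equally be read through the DC lens via~(\ref{loc_app_DPC_func_reform}).

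\begin{theorem}\label{abbasov_opt_cond_codif_coex_equiv}
For the function $$h(\Delta)=\max_{i\in I}[a_i+\langle v_i,\Delta\rangle]-\max_{j\in J}[b_j+\langle w_j,\Delta\rangle],$$ where $I$ and $J$ are finite index sets, to satisfy $h(\Delta)\geq 0$ for all $\Delta\in\mathbb{R}^n$ it is necessary and sufficient that one of the following equivalent conditions hold
\begin{equation}\label{abbasov_opt_cond_eq1}
\operatorname{co}\{(a_i,v_i)\mid i\in I\}\bigcap \operatorname{co}\{(-b_j,-w_j),(0,-w_j)\}\neq\emptyset\quad\forall j\in J
\end{equation}
or
\begin{equation}\label{abbasov_opt_cond_eq2}
C\bigcap L_0^+\neq\emptyset\quad\forall C\in\overline{E},
\end{equation}
where $L_0^+=\{(a,0_n)\mid a\geq 0\}$ and $$\overline{E}=\left\{C\mid C=\operatorname{co}\{[a_i-b_j,v_i-w_j],\ i\in I\},\ j\in J\right\}.$$
\end{theorem}
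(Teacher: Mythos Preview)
The paper does not prove Theorem~\ref{Abbasov_th_Polyakova_min_cond} itself; it is quoted from Polyakova. What the paper \emph{does} prove is Theorem~\ref{abbasov_th_eq_min_cond}, the equivalence of Polyakova's and Demyanov's conditions, and that is precisely what your proposal attempts. So on overall strategy you are aligned with the paper.

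Your execution, however, contains a genuine sign error that derails the argument. You write $h$ in the DC form $h(\Delta)=\max_i[\cdot]-\max_j[b_j+\langle w_j,\Delta\rangle]$ but then copy Polyakova's segment as $\operatorname{co}\{(-b_j,-w_j),(0,-w_j)\}$, which is the condition for the \emph{codifferential} form $\max+\min_j[b_j+\langle w_j,\cdot\rangle]$. For the DC parameters the correct segment is $\operatorname{co}\{(b_j,w_j),(0,w_j)\}$, exactly as in the paper's condition~(\ref{abbasov_minimality_cond_codif_coex_equiv_eq2}). This mismatch is why your unpacking yields $\sum_i\lambda_i v_i=-w_j$ and then the stray $-2w_j$ when you try to form $\sum_i\lambda_i(v_i-w_j)$; with consistent signs that computation simply gives $\sum_i\lambda_i v_i - w_j = 0$, as it should. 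Your restated theorem at the end inherits the same error in~(\ref{abbasov_opt_cond_eq1}).

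Once the signs are fixed, the ``ray versus segment'' issue you worry about is resolved in one line by the normalization~(\ref{1904e3am_codiff_rewritten}), which gives $a_i\le 0$ and $b_j\le 0$. The Demyanov condition unpacks to $\sum_i\lambda_i v_i=w_j$ and $\sum_i\lambda_i a_i\ge b_j$; since also $\sum_i\lambda_i a_i\le\max_i a_i=0$, one has $\sum_i\lambda_i a_i\in[b_j,0]$, i.e.\ $\sum_i\lambda_i a_i=\mu b_j$ for some $\mu\in[0,1]$, which is the segment condition. The converse is immediate since $\mu b_j\ge b_j$ when $b_j\le 0$. This is the paper's proof of Theorem~\ref{abbasov_th_eq_min_cond}; no endpoint/interval analysis of the feasible $\lambda$-set is needed.
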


Theorems \ref{Abbasov_th_Demaynov_min_cond} and
\ref{Abbasov_th_Polyakova_min_cond} can be used to show the
connection between polyhedral DC-functions, codifferentials and
coexhausters.

\begin{theorem}\label{abbasov_th_eq_min_cond}
Let the function $$h(\Delta)=\max_{i\in I}[a_i+\langle
v_i,\Delta\rangle]-\max_{j\in J}[b_j+\langle w_j,\Delta\rangle]$$
be given, where $I$ and $J$ are finite index sets. Then the
following statements are equivalent

\begin{enumerate}
\item The inequality $h(\Delta)\geq 0$ holds for all
$\Delta\in\mathbb{R}^n$.

\item The condition
\begin{equation}\label{abbasov_minimality_cond_codif_coex_equiv_eq2}
\operatorname{co}\{(a_i,v_i)\mid i\in I\}\bigcap
\operatorname{co}\{(b_j,w_j),(0,w_j)\}\neq \emptyset
\end{equation} holds for all $j\in J$.

\item The condition
\begin{equation}\label{abbasov_minimality_cond_codif_coex_equiv_eq3}
C\bigcap L^{+}\neq\emptyset
\end{equation}
holds for all $C\in\overline{E}$, where $L^+=\left\{(a,0_n)\mid
a\geq 0\right\}$ and
$$\overline{E}=\left\{C\mid C=\operatorname{co}\{[a_i-b_j,v_i-w_j],
i\in I\}, j\in J \right\}.$$
\end{enumerate}
\end{theorem}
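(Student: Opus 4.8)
The plan is to prove the chain of equivalences $1 \Leftrightarrow 2 \Leftrightarrow 3$ by exploiting the reformulation of $h$ and the two cited theorems. First I would rewrite $h$ in codifferential form as $h(\Delta)=\max_{i\in I}[a_i+\langle v_i,\Delta\rangle]+\min_{j\in J}[-b_j+\langle -w_j,\Delta\rangle]$, so that the lower codifferential data is $\{(a_i,v_i)\mid i\in I\}$ and the upper codifferential data is $\{(-b_j,-w_j)\mid j\in J\}$. Applying Theorem \ref{Abbasov_th_Polyakova_min_cond} verbatim with $b_j$ replaced by $-b_j$ and $w_j$ replaced by $-w_j$ gives that $h(\Delta)\ge 0$ for all $\Delta$ iff $\operatorname{co}\{(a_i,v_i)\mid i\in I\}\cap \operatorname{co}\{(b_j,w_j),(0,w_j)\}\neq\emptyset$ for all $j\in J$. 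This establishes $1\Leftrightarrow 2$ essentially for free, the only care being the double sign flip inside the set $\operatorname{co}\{(-b_j,-w_j),(0,-w_j)\}=-\operatorname{co}\{(b_j,w_j),(0,w_j)\}$ and then noting the resulting intersection condition with a negated convex set can be restated as in \eqref{abbasov_minimality_cond_codif_coex_equiv_eq2}; I would spell this sign bookkeeping out carefully since it is the most error-prone part.

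Next, for $1\Leftrightarrow 3$ I would use the fourth line of \eqref{loc_app_DPC_func_reform}, namely $h(\Delta)=\min_{C\in\overline{E}}\max_{[a,v]\in C}[a+\langle v,\Delta\rangle]$ with $\overline{E}=\{\operatorname{co}\{[a_i-b_j,v_i-w_j]\mid i\in I\}\mid j\in J\}$, which matches exactly the set $\overline{E}$ in the statement. Then Theorem \ref{Abbasov_th_Demaynov_min_cond} (Demyanov's minimality condition in terms of an upper coexhauster) applies directly: $h(\Delta)\ge 0$ for all $\Delta$ iff $C\cap L^+\neq\emptyset$ for every $C\in\overline{E}$, which is precisely \eqref{abbasov_minimality_cond_codif_coex_equiv_eq3}. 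So $1\Leftrightarrow 3$ is also immediate from a cited result once the coexhauster representation is identified.

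Finally, although $1\Leftrightarrow 2$ and $1\Leftrightarrow 3$ already close the loop, it is worth recording the direct equivalence $2\Leftrightarrow 3$ to make the geometric content transparent. Fix $j\in J$. The set $C=\operatorname{co}\{(a_i-b_j,v_i-w_j)\mid i\in I\}$ is the translate of $\operatorname{co}\{(a_i,v_i)\mid i\in I\}$ by $(-b_j,-w_j)$. A point $(a,0_n)\in C$ with $a\ge 0$ exists iff there are multipliers $\lambda_i\ge 0$, $\sum_i\lambda_i=1$, with $\sum_i\lambda_i(v_i-w_j)=0_n$ and $\sum_i\lambda_i(a_i-b_j)\ge 0$; equivalently $\sum_i\lambda_i v_i=w_j$ and $\sum_i\lambda_i a_i\ge b_j$. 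On the other side, a point in $\operatorname{co}\{(a_i,v_i)\mid i\in I\}\cap\operatorname{co}\{(b_j,w_j),(0,w_j)\}$ is, for some $\mu\in[0,1]$ and multipliers $\lambda_i\ge 0$ summing to $1$, a pair satisfying $\sum_i\lambda_i v_i=\mu w_j+(1-\mu)w_j=w_j$ and $\sum_i\lambda_i a_i=\mu b_j+(1-\mu)\cdot 0=\mu b_j\le b_j$ — here one must be careful about the sign of $b_j$; if $b_j\ge 0$ the condition reads $0\le\sum_i\lambda_i a_i\le b_j$, and in general the segment $\operatorname{co}\{(b_j,w_j),(0,w_j)\}$ is exactly $\{(t,w_j)\mid t\text{ between }0\text{ and }b_j\}$. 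I expect this translation between "a convex combination hitting the line $L$ at height $\ge 0$" and "a convex combination hitting the fixed segment at height $w_j$" to be the only genuine obstacle, and it is a routine but delicate reconciliation of the two normalizations; once done, the two descriptions coincide and the theorem follows.
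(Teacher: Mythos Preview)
Your approach matches the paper's almost exactly: the paper also reduces $1\Leftrightarrow 2$ and $1\Leftrightarrow 3$ to Theorems~\ref{Abbasov_th_Polyakova_min_cond} and~\ref{Abbasov_th_Demaynov_min_cond} respectively, and then verifies $2\Leftrightarrow 3$ directly via convex multipliers, just as you propose.

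The one place where you leave a genuine loose end is the ``routine but delicate reconciliation'' in the direct $2\Leftrightarrow 3$ argument. You correctly observe that $C\cap L^{+}\neq\emptyset$ amounts to $\sum_i\lambda_i v_i=w_j$ and $\sum_i\lambda_i a_i\ge b_j$, whereas condition~\eqref{abbasov_minimality_cond_codif_coex_equiv_eq2} says $\sum_i\lambda_i v_i=w_j$ and $\sum_i\lambda_i a_i=\mu b_j$ for some $\mu\in[0,1]$. These are \emph{not} equivalent for arbitrary real $a_i,b_j$ (take $n=1$, $I=J=\{1\}$, $a_1=10$, $v_1=0$, $b_1=-5$, $w_1=0$: then $h\equiv 15\ge 0$ and (3) holds, but (2) fails). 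The paper resolves this by invoking the standing normalization~\eqref{1904e3am_codiff_rewritten}, i.e.\ $a_i\le 0$ for all $i\in I$ and $b_j\le 0$ for all $j\in J$. Under that assumption, $\sum_i\lambda_i a_i\le 0$ automatically, so $\sum_i\lambda_i a_i\ge b_j$ forces $\sum_i\lambda_i a_i\in[b_j,0]$, and conversely $\mu b_j\ge b_j$ when $b_j\le 0$; this is exactly what makes the two descriptions coincide. You should cite this normalization explicitly rather than leaving the sign bookkeeping unspecified --- without it the direct equivalence (and indeed Polyakova's theorem as quoted) does not hold.
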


\begin{proof}
We only need to prove the equivalence of conditions
(\ref{abbasov_minimality_cond_codif_coex_equiv_eq2}) and
(\ref{abbasov_minimality_cond_codif_coex_equiv_eq3}), since the
rest parts of the proof follows immediately from Theorems
\ref{Abbasov_th_Demaynov_min_cond} and
\ref{Abbasov_th_Polyakova_min_cond}.

First of all note that according to
(\ref{1904e3am_codiff_rewritten}) we have $a_i\leq 0$ and $b_i\leq
0$ for any $i\in I$ and $j\in J$.

Let condition (\ref{abbasov_minimality_cond_codif_coex_equiv_eq2})
be valid. Choose an arbitrary $j\in J$. Then there exists
$\lambda_{i}$, $i\in I$ such that
$$
\begin{cases}
\displaystyle\sum_{i\in I}\lambda_{i}=1,\\
\lambda_{ij}\geq 0\quad\forall i\in I,\\
\end{cases}
$$
for which we have
$$
\begin{cases}
\displaystyle\sum_{i\in I}\lambda_{i}a_i\geq b_j,\\
\displaystyle\sum_{i\in I}\lambda_{i}v_i=w_j,
\end{cases}
$$
whence
$$
\begin{cases}
\displaystyle\sum_{i\in I}\lambda_{i}(a_i-b_j)\geq 0,\\
\displaystyle\sum_{i\in I}\lambda_{i}(v_i-w_j)=0,
\end{cases}
$$
 This immediately brings us to
 (\ref{abbasov_minimality_cond_codif_coex_equiv_eq3}).

Since all the above steps of the proof can be reversed, we
conclude that (\ref{abbasov_minimality_cond_codif_coex_equiv_eq3})
implies (\ref{abbasov_minimality_cond_codif_coex_equiv_eq2}).

\qed
\end{proof}

\begin{remark}
Condition (\ref{abbasov_minimality_cond_codif_coex_equiv_eq2}) can
be rewritten in terms of codifferentials as
\begin{equation}\label{abbasov_codif_min_cond_remark2}
\underline{d}h\bigcap
\operatorname{co}\{(-b_j,-w_j),(0,-w_j)\}\neq \emptyset\quad
\forall j\in J,
\end{equation}
 where $\overline{d}h=\operatorname{co}\{(b_j,w_j)\mid j\in
J\}$.
\end{remark}

Similar result can be stated for maximum conditions.

\begin{theorem}\label{abbasov_th_eq_max_cond}
Let the function $$h(\Delta)=\max_{i\in I}[a_i+\langle
v_i,\Delta\rangle]-\max_{j\in J}[b_j+\langle w_j,\Delta\rangle]$$
be given, where $I$ and $J$ are finite index sets. Then the
following statements are equivalent

\begin{enumerate}
\item The inequality $h(\Delta)\leq 0$ holds for all
$\Delta\in\mathbb{R}^n$.

\item The condition
\begin{equation}\label{abbasov_maximality_cond_codif_coex_equiv_eq2}
\operatorname{co}\{(b_j,w_j)\mid j\in J\}\bigcap
\operatorname{co}\{(a_i,v_i),(0,v_i)\}\neq \emptyset
\end{equation} holds for all $i\in I$.

\item The condition
\begin{equation}\label{abbasov_maximality_cond_codif_coex_equiv_eq3}
C\bigcap L^{-}\neq\emptyset
\end{equation}
holds for all $C\in\underline{E}$, where $L^-=\left\{(a,0_n)\mid
a\leq 0\right\}$ and
$$\underline{E}=\left\{C\mid C=\operatorname{co}\{[a_i-b_j,v_i-w_j],
j\in J\}, i\in I \right\}.$$
\end{enumerate}
\end{theorem}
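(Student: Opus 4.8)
The plan is to mirror the proof of Theorem~\ref{abbasov_th_eq_min_cond} essentially verbatim, exploiting the symmetry between the minimum and maximum cases. First I would observe that $h(\Delta)\le 0$ for all $\Delta$ is equivalent to $-h(\Delta)\ge 0$ for all $\Delta$, and that
$$-h(\Delta)=\max_{j\in J}[b_j+\langle w_j,\Delta\rangle]-\max_{i\in I}[a_i+\langle v_i,\Delta\rangle],$$
which is again a polyhedral DC function with the roles of the two index sets interchanged. Hence statement~(1) is equivalent to applying Theorem~\ref{abbasov_th_eq_min_cond} to $-h$, which immediately yields the equivalence of (1) with condition~(\ref{abbasov_maximality_cond_codif_coex_equiv_eq2}) (it is exactly condition~(\ref{abbasov_minimality_cond_codif_coex_equiv_eq2}) with $I$ and $J$ swapped and $(a_i,v_i)\leftrightarrow(b_j,w_j)$) and with the condition $C\cap L^{+}\neq\emptyset$ for all $C$ in the family $\{\operatorname{co}\{[b_j-a_i,w_j-v_i]:j\in J\}:i\in I\}$. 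To finish one notes that this family is precisely $\{-C:C\in\underline{E}\}$, and $-C\cap L^{+}\neq\emptyset$ iff $C\cap L^{-}\neq\emptyset$, giving condition~(\ref{abbasov_maximality_cond_codif_coex_equiv_eq3}).

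Alternatively, and perhaps more in the spirit of the paper, I would give the direct argument. For the equivalence of~(1) and~(\ref{abbasov_maximality_cond_codif_coex_equiv_eq2}) I would invoke Theorem~\ref{Abbasov_th_Polyakova_min_cond} (Polyakova) applied to $-h$ rewritten as $\max_{j}[b_j+\langle w_j,\Delta\rangle]+\min_{i}[-a_i-\langle v_i,\Delta\rangle]\ge 0$, and for the equivalence of~(1) and~(\ref{abbasov_maximality_cond_codif_coex_equiv_eq3}) I would invoke Theorem~\ref{Abbasov_th_Demaynov_min_cond} (Demyanov): $-h(\Delta)\ge 0$ iff $\min_{C\in\underline{E}'}\max_{[a,v]\in C}[a+\langle v,\Delta\rangle]\ge 0$ iff $C\cap L^{+}\neq\emptyset$ for all $C\in\underline{E}'$, where $\underline{E}'=\{-C:C\in\underline{E}\}$; translating back via $-C\cap L^{+}=-(C\cap L^{-})$ gives~(\ref{abbasov_maximality_cond_codif_coex_equiv_eq3}). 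The only substantive bookkeeping is to check that negating $h$ really does send the lower coexhauster $\underline{E}$ of $h$ to an upper coexhauster of $-h$ of the stated form, which is the content of expansions~(\ref{loc_app_DPC_func_reform}) read with $I$ and $J$ exchanged.

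The core combinatorial step, exactly parallel to the min case, is the chain of equivalences
$$\operatorname{co}\{(b_j,w_j):j\in J\}\cap\operatorname{co}\{(a_i,v_i),(0,v_i)\}\neq\emptyset\ (\forall i\in I)$$
$$\iff\ \exists\,\mu_j\ge 0,\ \textstyle\sum_j\mu_j=1:\ \sum_j\mu_j b_j\le a_i,\ \sum_j\mu_j w_j=v_i\ (\forall i\in I)$$
$$\iff\ \exists\,\mu_j\ge 0,\ \textstyle\sum_j\mu_j=1:\ \sum_j\mu_j(b_j-a_i)\le 0,\ \sum_j\mu_j(w_j-v_i)=0\ (\forall i\in I),$$
and the last line says exactly that the point $\sum_j\mu_j[b_j-a_i,w_j-v_i]=\sum_j\mu_j[-(a_i-b_j),-(v_i-w_j)]$ lies in $\operatorname{co}\{-(a_i-b_j),-(v_i-w_j):j\in J\}=-C$ with vanishing last-$n$ coordinates and nonpositive first coordinate — i.e. $-C\cap L^{-}$... wait, $\sum_j\mu_j(b_j-a_i)\le0$ places it in $L^{-}$, and since it is a convex combination of the generators of $-C$ it places it in $-C$, giving $(-C)\cap L^{-}\neq\emptyset$, equivalently $C\cap L^{+}\neq\emptyset$ — but the claim is $C\cap L^{-}\neq\emptyset$. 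I would resolve this sign matter carefully when writing the proof; the cleanest route is the reduction to the already-proved Theorem~\ref{abbasov_th_eq_min_cond} applied to $-h$, where the signs are guaranteed to come out right, and then simply record that $-h$'s upper coexhauster $\{\operatorname{co}\{[b_j-a_i,w_j-v_i]:j\in J\}:i\in I\}$ equals $\{-C:C\in\underline{E}\}$ and that $-C\cap L^{+}\neq\emptyset\iff C\cap L^{-}\neq\emptyset$. I expect this sign/orientation tracking — making sure the generators $[a_i-b_j,v_i-w_j]$ versus their negatives and $L^{+}$ versus $L^{-}$ are matched consistently — to be the only real obstacle; everything else is a transcription of the min-case argument with $I\leftrightarrow J$.
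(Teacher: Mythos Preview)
Your reduction-to-$-h$ approach is exactly what the paper intends: the paper gives no explicit proof of Theorem~\ref{abbasov_th_eq_max_cond}, merely prefacing it with ``Similar result can be stated for maximum conditions,'' so the implied argument is precisely the duality you describe. Your verification that the upper coexhauster of $-h$ is $\{-C:C\in\underline{E}\}$ and that $-C\cap L^{+}\neq\emptyset\iff C\cap L^{-}\neq\emptyset$ is correct and completes the proof.

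One correction to your direct combinatorial argument, so that it also goes through cleanly: under the normalization $a_i\le 0$, $b_j\le 0$ (which the paper invokes via~(\ref{1904e3am_codiff_rewritten}) in the min case), a point of $\operatorname{co}\{(a_i,v_i),(0,v_i)\}$ has first coordinate in $[a_i,0]$, so the intersection condition~(\ref{abbasov_maximality_cond_codif_coex_equiv_eq2}) reads $\sum_j\mu_j b_j\ge a_i$ (not $\le a_i$) together with $\sum_j\mu_j w_j=v_i$. Equivalently $\sum_j\mu_j(a_i-b_j)\le 0$ and $\sum_j\mu_j(v_i-w_j)=0$, which says directly that $\sum_j\mu_j[a_i-b_j,\,v_i-w_j]\in C\cap L^{-}$. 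With this sign fixed, the direct chain of equivalences is a verbatim transcription of the paper's proof of Theorem~\ref{abbasov_th_eq_min_cond} with $I\leftrightarrow J$, and no detour through $-C$ is needed.
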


\begin{remark}
Condition (\ref{abbasov_maximality_cond_codif_coex_equiv_eq2}) can
be rewritten in terms of codifferentials as
\begin{equation}\label{abbasov_codif_max_cond_remark2}
\overline{d}h\bigcap \operatorname{co}\{(-a_i,-v_i),(0,-v_i)\}\neq
\emptyset\quad \forall i\in I,
\end{equation}
where $\underline{d}h=\operatorname{co}\{(a_i,w_i)\mid i\in I\}$.
\end{remark}

\begin{example}\label{abbasov_exmpl2} Let $f\colon\mathbb{R}\to\mathbb{R}$ be a
function of the form
$$f(x)=\max\{-x^2+2x,-x^2-2x,0\}-\max\{x-1,-x-1,0\}.$$

Fig. \ref{exmpl_2_exmpl2_func_graph} shows that the point $x_1=0$
is the local minimum of the function while the point $x_2=1$ is a
local maximum.

\begin{figure}[H]
  \includegraphics[width=0.45\textwidth]{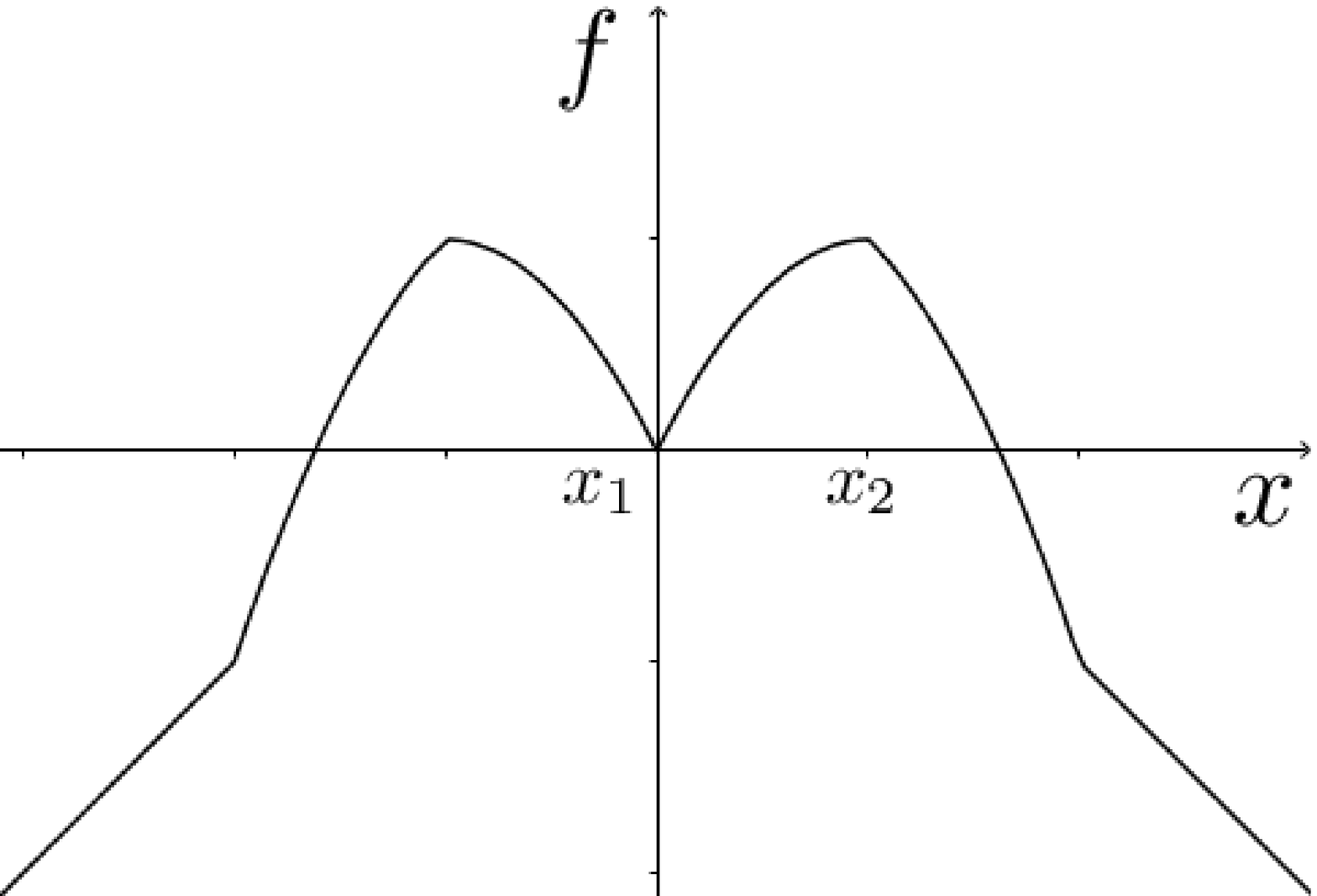}
\caption{Graph of the function $h$ in Example 2.}
\label{exmpl_2_exmpl2_func_graph}       
\end{figure}

Let us check conditions for a minimum at $x_1$ via Theorem
\ref{abbasov_th_eq_min_cond} and conditions for a maximum at $x_2$
via Theorem \ref{abbasov_th_eq_max_cond}. We start with an
expansion of $f$ in the neighborhood of $x$.
\begin{equation*}
\begin{split}
f(x+\Delta)=f(x)+\max\{-x^2+2x-f(x)+(-2x+2)\Delta,-x^2-2x\\-f(x)+(-2x-2)\Delta,-f(x)\}\\-\max\{x-1+\Delta,-x-1-\Delta,0\}+o(\Delta),
\end{split}
\end{equation*}
where $\displaystyle\lim_{\Delta\to 0}\frac{o(\Delta)}{\Delta}=0$.
Hence
$$\underline{d}h_x=\operatorname{co}\left\{\begin{pmatrix} -x^2+2x-f(x)\\ -2x+2 \end{pmatrix},\begin{pmatrix} -x^2-2x-f(x)\\ -2x-2 \end{pmatrix},\begin{pmatrix} -f(x)\\ 0 \end{pmatrix}\right\},$$
$$\overline{d}h_x=\operatorname{co}\left\{\begin{pmatrix} -x+1\\ -1 \end{pmatrix},\begin{pmatrix} x+1\\ 1 \end{pmatrix},\begin{pmatrix} 0\\ 0 \end{pmatrix}\right\}.$$
A codifferential at the point $x_1$ has the form (see Fig.
\ref{exmpl_2_exmpl1_func_codif_exstr} a)
$$\underline{d}h_{x_1}=\operatorname{co}\left\{\begin{pmatrix} &0\\ &2 \end{pmatrix},\begin{pmatrix} &0\\ -&2 \end{pmatrix},\begin{pmatrix} 0\\ 0 \end{pmatrix},\right\}, \quad \overline{d}h_{x_1}=\operatorname{co}\left\{\begin{pmatrix} &1\\ -&1 \end{pmatrix},\begin{pmatrix} 1\\ 1 \end{pmatrix},\begin{pmatrix} 0\\ 0 \end{pmatrix}\right\}.$$
For an upper coexhauster we have
$\overline{E}(x_1)=\{C_1,C_2,C_3\}$, where
$$C_1=\operatorname{co}\left\{\begin{pmatrix} 1\\ 1 \end{pmatrix},\begin{pmatrix} &1\\ -&3 \end{pmatrix},\begin{pmatrix} &1\\ -&1 \end{pmatrix}\right\},\quad C_2=\operatorname{co}\left\{\begin{pmatrix} 1\\
3\end{pmatrix},\begin{pmatrix} &1\\
-&1\end{pmatrix},\begin{pmatrix} 1\\ 1 \end{pmatrix}\right\},$$
$$C_3=\operatorname{co}\left\{\begin{pmatrix} 0\\ 2 \end{pmatrix},\begin{pmatrix} &0\\-&2 \end{pmatrix},\begin{pmatrix} 0\\ 0 \end{pmatrix}\right\}$$
(see Fig. \ref{exmpl_2_exmpl1_func_codif_exstr} b).

\begin{figure}[H]
\begin{minipage}[h]{0.49\linewidth}
\center{\includegraphics[width=0.80\linewidth]{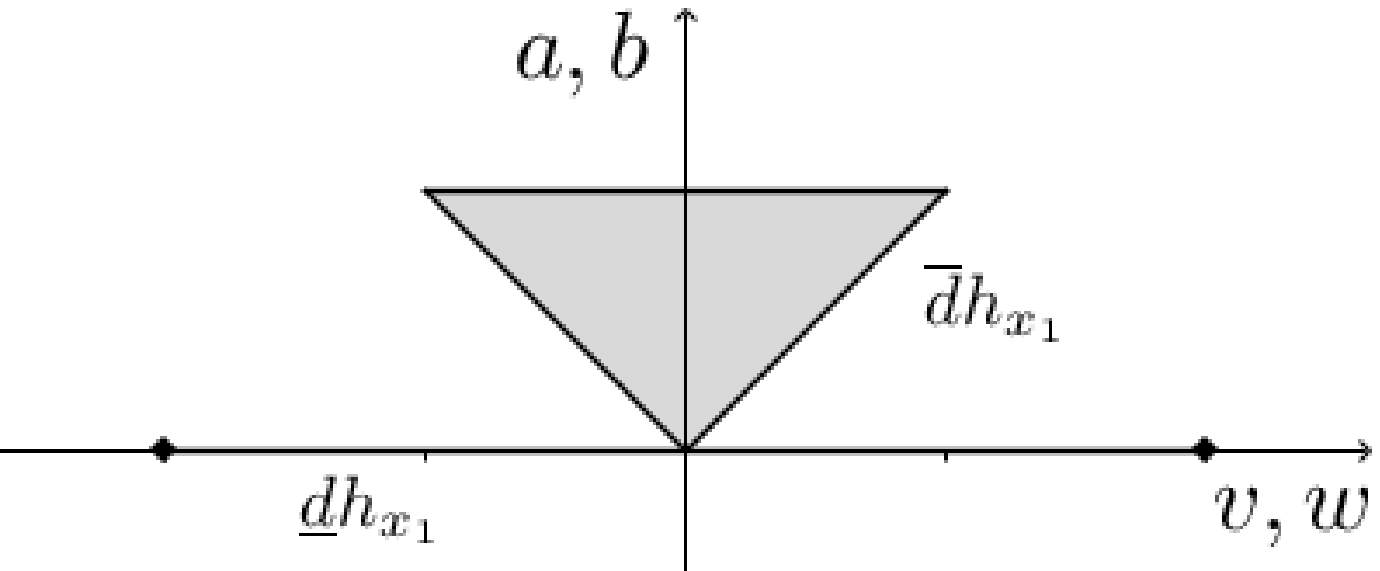}
\\ \textbf{a}}
\end{minipage}
\hfill
\begin{minipage}[h]{0.49\linewidth}
\center{\includegraphics[width=0.95\linewidth]{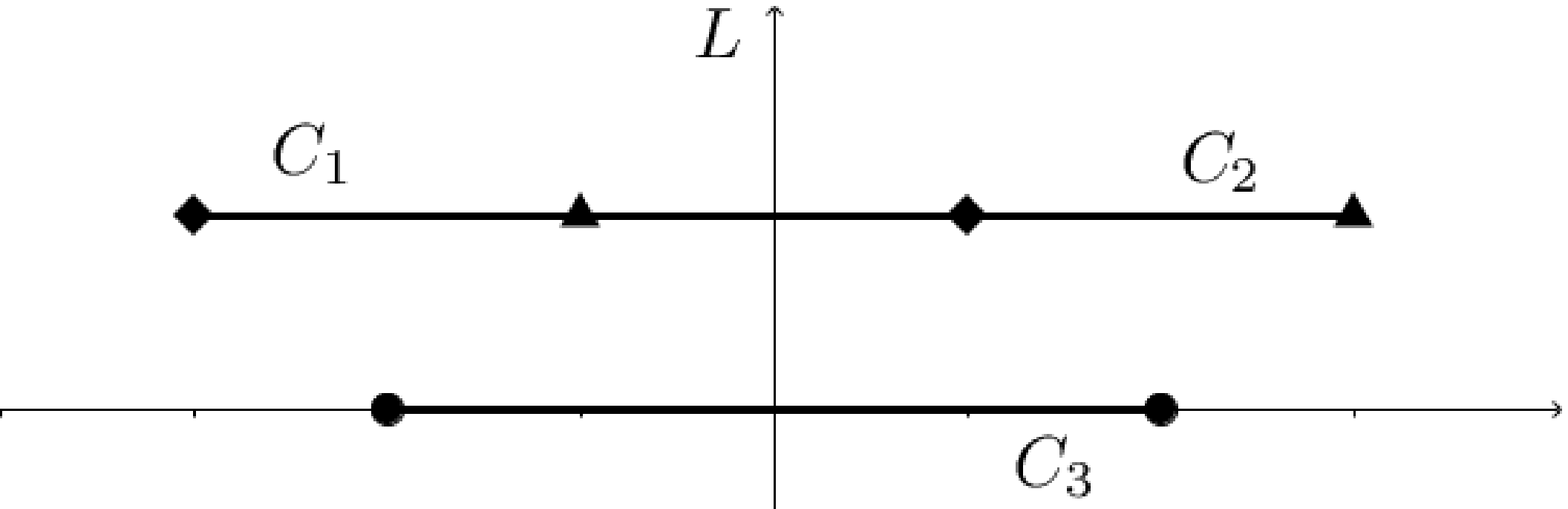}
\\ \textbf{b}}
\end{minipage}
\caption{\textbf{a.} A codifferential at point $x_1$ in Example
2.\ \textbf{b.} An upper coexhauster at point $x_1$ in Example 2.}
\label{exmpl_2_exmpl1_func_codif_exstr}
\end{figure}

Conditions (\ref{abbasov_minimality_cond_codif_coex_equiv_eq3})
and (\ref{abbasov_codif_min_cond_remark2}) hold which means that
$x_1$ is an $inf$-stationary point.

Now proceed to point $x_2$. A codifferential at this point has the
form
$$\underline{d}h_{x_2}=\operatorname{co}\left\{\begin{pmatrix} &0\\ &0 \end{pmatrix},\begin{pmatrix} -&4\\ -&4 \end{pmatrix},\begin{pmatrix} -&1\\ &0 \end{pmatrix},\right\}, \quad \overline{d}h_{x_2}=\operatorname{co}\left\{\begin{pmatrix} &0\\ -&1 \end{pmatrix},\begin{pmatrix} 2\\ 1 \end{pmatrix},\begin{pmatrix} 0\\ 0 \end{pmatrix}\right\},$$
(see Fig. \ref{exmpl_2_exmpl2_func_codif_exstr} a), whence for a
lower coexhauster we have $\underline{E}(x_2)=\{C_4,C_5,C_6\}$,
where
$$C_4=\operatorname{co}\left\{\begin{pmatrix} &0\\ -&1 \end{pmatrix},\begin{pmatrix} &2\\ &1 \end{pmatrix},\begin{pmatrix} &0\\ &0 \end{pmatrix}\right\},\quad C_5=\operatorname{co}\left\{\begin{pmatrix} -4\\
-5\end{pmatrix},\begin{pmatrix} -&2\\
-&3\end{pmatrix},\begin{pmatrix} -4\\ -4 \end{pmatrix}\right\},$$
$$C_6=\operatorname{co}\left\{\begin{pmatrix} -1\\ -1 \end{pmatrix},\begin{pmatrix} &1\\&1 \end{pmatrix},\begin{pmatrix} -&1\\ &0 \end{pmatrix}\right\}$$
(see Fig. \ref{exmpl_2_exmpl2_func_codif_exstr} b).

\begin{figure}[H]
\begin{minipage}[h]{0.49\linewidth}
\center{\includegraphics[width=0.9\linewidth]{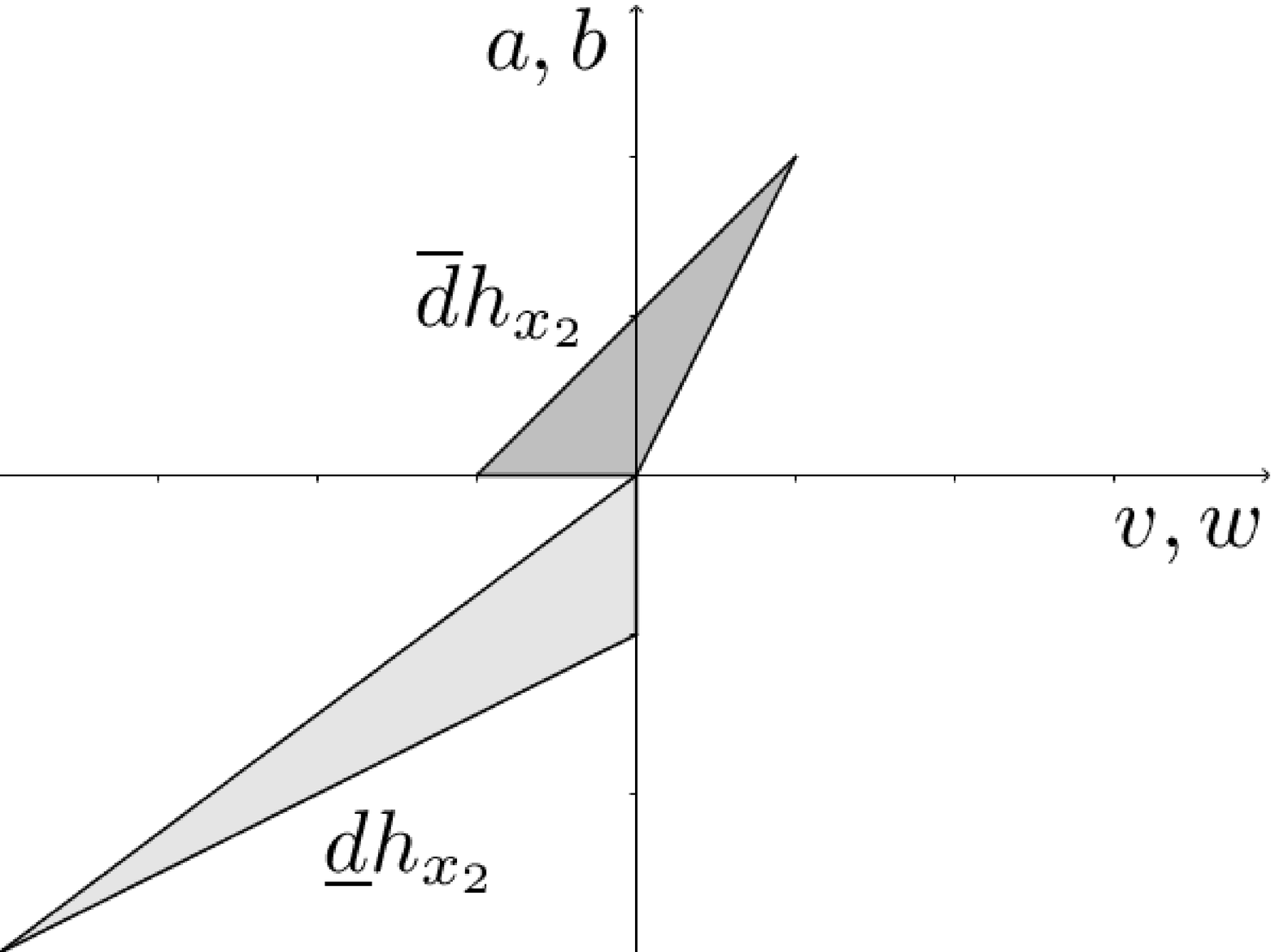}
\\ \textbf{a}}
\end{minipage}
\hfill
\begin{minipage}[h]{0.49\linewidth}
\center{\includegraphics[width=0.95\linewidth]{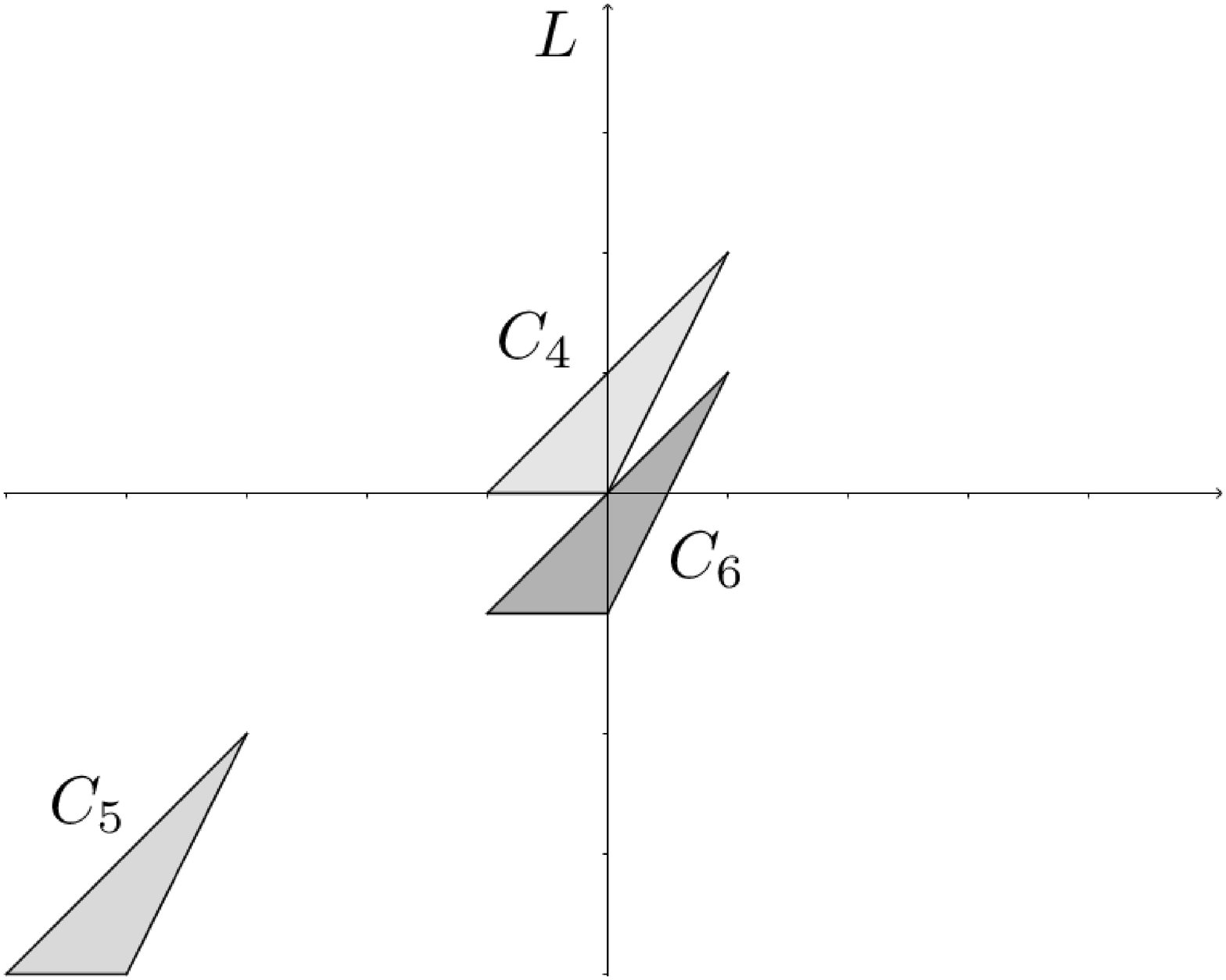}
\\ \textbf{b}}
\end{minipage}
\caption{\textbf{a.} A codifferential at point $x_2$ in Example
2.\ \textbf{b.} A lower coexhauster at point $x_2$ in Example 2.}
\label{exmpl_2_exmpl2_func_codif_exstr}
\end{figure}
Despite the fact that $x_2$ is a $sup$-stationary point we see
that neither condition
(\ref{abbasov_maximality_cond_codif_coex_equiv_eq3}) nor
 condition (\ref{abbasov_codif_max_cond_remark2}) are
fulfilled here.
\end{example}

\section{Conclusion}
We identified the connection between directional derivative and
nonhomogeneous approximations. Based on these connections, we
reformulated optimality conditions in terms of such
approximations.

Theorems that unite boundedness and optimality conditions in terms
of codifferentials, coexhausters and difference of polyhedral
convex functions were derived. It must be noted that in the case
of difference of polyhedral convex function $f$, expansion
(\ref{abbasov_expansion_main}) does not contain $o_x(\Delta)$
since this summand equals to zero and therefore all the conditions
described in Section \ref{abbasov_codif_coex_DC_func_sect3} are
necessary and sufficient conditions of global optimality. At the
same time, if we deal with a function which is not the difference
of polyhedral convex functions but can be approximated in that
form, results of Section \ref{abbasov_codif_coex_DC_func_sect3}
are only sufficient conditions of stationarity. This was
demonstrated at point $x_2$ in Example \ref{abbasov_exmpl2}.

The intention of this paper is to widen the facilities of
researchers in solving nondifferentiable optimization problems and
to make closer specialist working in different branches of
nonsmooth analysis.

\section*{Acknowledgements}
Results in Section 3 were obtained in the Institute for Problems
in Mechanical Engineering of the Russian Academy of Sciences with
the support of Russian Science Foundation (RSF), project No.
20-71-10032.

\end{document}